\newcommand{\cmark}{\ding{51}}%
\DeclareMathOperator*{\argmin}{arg\,min}
\newtheorem{theorem}{Theorem}
\newtheorem{lemma}[theorem]{Lemma}
\newtheorem{remark}[theorem]{Remark}
\newtheorem{assumption}[theorem]{Assumption}
\newtheorem{definition}[theorem]{Definition}
\newcommand{\R}{\mathbb{R}}
\newcommand{\N}{\mathbb{N}}
\begin{document}

\title[Maximum Discrepancy Generative Regularization for SCSS]{Maximum Discrepancy Generative Regularization and Non-Negative Matrix Factorization for Single Channel Source Separation}

\author{Martin Ludvigsen and Markus Grasmair}
\address{Department of Mathematical Sciences, NTNU -- Norwegian University of Science and Technology, 7491 Trondheim, Norway}
\date{\today}

\begin{abstract}
  The idea of adversarial learning of regularization functionals
  has recently been introduced in the wider context of inverse problems.
  The intuition behind this method is the realization that
  it is not only necessary to learn the basic features that make up a class of signals
  one wants to represent, but also, or even more so, which features to avoid in the representation.
  In this paper, we will apply this approach to the training of generative models,
  leading to what we call Maximum Discrepancy Generative Regularization.
  In particular, we apply this to problem of source separation by means of Non-negative Matrix Factorization (NMF)
  and present a new method for the adversarial training of NMF bases.
  We show in numerical experiments, both for image and audio separation,
  that this leads to a clear improvement
  of the reconstructed signals, in particular in the case where little or
  no strong supervision data is available.
\end{abstract}

\keywords{
  Generative regularization; Source Separation; Matrix Factorization; Adversarial learning.
}
\subjclass{94A12; % Signal theory (characterization, reconstruction, filtering, etc.)
  47A52; %Linear operators and ill-posed problems, regularization
  94A08%Image processing (compression, reconstruction, etc.) in information and communication theory
}

\maketitle

\section{Introduction}\label{se:intro}

Single Channel Source Separation (SCSS) is a type of inverse problem that is
concerned with the recovery of individual source signals from a measured mixed signal.
This problem arises in various real-world applications, such as speech
and music processing, biomedical signal analysis, and image processing. 
In such applications, the observed signal can be approximately modelled as a linear combination
of multiple sources, and the objective is to estimate the underlying sources from this mixture.
The simplest setting of this problem is that of denoising,
where one wants to separate a noisy signal into a clean signal and pure noise;
here the term ``noise'' is to be understood in a very wide sense and includes
all unwanted parts of the original signal.
The term ``single channel'' comes from the fact that we assume to only have
one measurement of the mixed signal, as opposed to the multichannel case where
we have several measurements with different weights for the component signals.

Mathematically, the problem can be formulated as follows:
Given a signal $v \in \R^m$, find $S$ individual signals
$u_i \in \mathbb{R}^m$ and potentially also weights $0 \le a_i\le 1$ with
$\sum_{i = 1}^S a_i = 1$ such that
\begin{equation}
  \sum_{i = 1}^S a_i u_i =: A\mathbf{u} = v.
  \label{eq:SCSS}
\end{equation}
Of course, this problem is vastly underdetermined, and it
cannot be reasonably solved without using some prior knowledge
about the properties of the component signals $u_i$ and, if necessary,
the weights $a_i$.
One possibility for encoding this prior knowledge is in terms of
smallness of some regularization's functionals $\mathcal{R}_i$ and $\mathcal{S}_i$.
With this assumption, one can then reconstruct the component signals $u_i$
by minimizing the (generalised, multi-parameter) Tikhonov functional
\[
  \min_{u_i,\, a_i} \Bigl[\frac{1}{2}\lVert A\mathbf{u}-v\rVert^2
  + \sum_{i=1}^S \lambda_i \mathcal{R}_i(u_i) + \sum_{i=1}^S \mu_i \mathcal{S}_i(a_i)\Bigr].
\]
Here $A\mathbf{u}$ is as defined in~\eqref{eq:SCSS}.
Moreover, the regularization parameters $\lambda_i$, $\mu_i > 0$ have to be chosen
in a suitable manner.

\subsection*{Training of regularization functionals}

A major challenge in this approach is
the definition of the different regularization functionals $\mathcal{R}_i$.
Within mathematical image processing, a classical approach
is to use hand-crafted regularization functionals that are intended to
capture the different properties of the component signals.
This idea has for instance been applied in~\cite{VesOsh03}
in order to separate an image into a cartoon component and a texture component.
Other examples of such decompositions have been proposed in~\cite{AujAubBlaCha05,BerVesSapOsh03},
see also \cite{GraNau23}.
In the recent years, alternative approaches have been developed
that instead use learned regularization \cite{mukherjee2023learned}.

Of particular interest for this paper is the idea
of adversarial regularization, see \cite{lunz2018adversarial}.
There the authors consider the more general inverse problem of solving
a linear equation of the form $Au = v$ with arbitrary linear operator $A$.
To that end, they assume that one has knowledge about the probability
distributions $\mathbb{P}_V$ and $\mathbb{P}_U$ of the measured data and the data of interest respectively.
In order to train a regularization functional, they use the probability
distribution $\mathbb{P}_V$ in order to define a new distribution of adversarial data on
the solution space by setting $\mathbb{P}_Z := (A^\dagger)_{\#} \mathbb{P}_U$.
Here $A^\dagger$ is the pseudo-inverse of the operator $A$,
and $(A^\dagger)_{\#}$ denotes the push-forward operation.

Then they define a regularization functional $\mathcal{R}$ for the solution
of the inverse problem by computing the \emph{Wasserstein distance}
$\mathbb{W}(\mathbb{P}_U,\mathbb{P}_Z)$ between the distributions
$\mathbb{P}_U$ and $\mathbb{P}_Z$.
More precisely, they set $\mathcal{R}$ to be the solution of
\begin{equation}
  \mathbb{W}(\mathbb{P}_U, \mathbb{P}_Z) =
  \min_{\mathcal{R}\colon\lVert \mathcal{R}\rVert_L \le 1} \mathbb{E}_{u \sim \mathbb{P}_U}[\mathcal{R}(u)]
  - \mathbb{E}_{u \sim \mathbb{P}_Z}[\mathcal{R}(u)].
  \label{eq:advregtrain}
\end{equation}
Here the minimisation is performed over all Lipschitz functions
$\mathcal{R} \colon U \to \mathbb{R}$ with Lipschitz constant $\lVert \mathcal{R} \rVert_L \le 1$.
Moreover $\mathbb{E}_{u\sim \mathbb{P}_X}$ denotes the expectation given the
distribution $\mathbb{P}_X$ of the variable $u$.
For the practical solution of the problem~\eqref{eq:advregtrain},
one restricts the admissible regularization functionals to
some a-priori defined class,
for instance that of neural networks with a given architecture. 

The intuition behind the adversarial approach is that $\mathcal{R}$ should yield low values for true solutions and
large values for naively inverted data, which can be treated as adversarial data, since
the measurement data $v$ can be assumed to include noise.

\subsection*{Training of generative models}

An alternative to this approach is the usage of generative models
for the component signals \cite{duff2021regularising,subakan2018generative,jayaram2020source}. 
Here we model data from source $i$ by a generating function
\[
  g_i \colon \R^{d_i} \to \R^m
\]
that maps the lower dimensional latent variables $h_i \in \R^{d_i}$ to a typical component
signal $u_i = g_i(h_i)$.
Then one can reconstruct the component signals $u_i$
by solving the Tikhonov functional
\begin{equation}\label{eq:TikGen}
  \min_{u_i,\, a_i,\, h_i} \Bigl[\frac{1}{2}\lVert A\mathbf{u}-v\rVert^2
  + \sum_{i=1}^S \bigl(\lambda_i \mathcal{R}_i(h_i) + \mu_i \mathcal{S}_i(a_i)\bigr)\Bigr]
  \ \text{ s.t. } g_i(h_i) = u_i,\ i=1,\ldots,S.
\end{equation}
Common choices for the regularization functionals
$\mathcal{R}_i \colon \R^{d_i} \to \R$ in this case are the squared Euclidean norm
$\mathcal{R}_i(h_i) = \frac{1}{2}\lVert h_i\rVert_2^2$ modelling
a Gaussian distribution of the latent variables,
or the $1$-norm $\mathcal{R}_i(h_i) = \lVert h_i \rVert_1$ modelling
a Laplace distribution of the latent variables and enforcing
a sparse representation of the component signal $u_i$.

The challenge in this approach is the training of
the generating functions $g_i$ for the different component signals.
The regularization terms, in contrast, are chosen a-priori
and are in fact incorporated in the training process of the models,
as are the regularization parameters.

In this paper, we will focus on this approach.
More precisely, we will introduce a novel method
for the training of the generating functions $g_i$ that is
based on the idea of adversarial regularization in~\cite{lunz2018adversarial}.

\subsection*{Other approaches}
As an alternative to generative regularization for inverse problems, it is possible to discriminatively train end-to-end mappings from observed data $v$ to the separated sources $\mathbf{u}$ \cite{jin2017deep}.
In recent years, this has been particularly common for speech source separation and enhancement \cite{vincent2018audio,wang2018supervised}.
Such approaches can be effective in situations with large amounts of high quality data, as they can be easily fine-tuned to specific data.
However, this makes them less generalizable compared to a generative approach, which can be used in a plug and play manner.

\subsection*{Overview and structure}

In Section~\ref{sec:adversarial} we will introduce our main ideas
for the adversarial training of generative models in a general setting.
In Section~\ref{sec:NMF} we will then consider the particular case
of Non-negative Matrix Factorization (NMF), which is a traditional method
in the context of source separation.
Details of the numerical implementation are described in Section~\ref{sec:numerics},
and in Section~\ref{sec:numerical} we present numerical experiments
both for image and audio data.
The appendices contain proofs of the theoretical results of
this paper as well as further details concerning the numerics.

\section{Adversarial training for SCSS models}
\label{sec:adversarial}

We will now propose a strategy for adapting the
idea of adversarial regularization to the training
of generative models for source separation.

\subsection{Notation and assumptions}

We assume that we are given training data consisting of
mixed signals $v^{(k)}$ as well as clean individual signals $u_i^{(k)}$
(see Section~\ref{ss:supervision} below for different possibilities
how these signals could be given).
Our goal is to use these training data to construct generating
functions $g_i$ such that we can use~\eqref{eq:TikGen} in order
to separate new mixed signals $v$.

For each of the sources $i$ we choose a class $\mathcal{G}_i$
of generating functions $g_i \colon \R^{d_i} \to \R^m$
with which we want to represent signals from this source,
as well as corresponding regularization functionals
$\mathcal{R}_i \colon \R^{d_i} \to \R_{\ge 0} \cup\{+\infty\}$
and
$\mathcal{S}_i \colon [0,1] \to \R_{\ge 0} \cup\{+\infty\}$
and regularization
parameters $\lambda_i > 0$ and $\mu_i > 0$.

As an example, $\mathcal{G}_i$ could be a class of neural
networks of a fixed architecture.
Another example, which we will discuss in more detail
in Section~\ref{sec:NMF} below, is the case of non-negative matrix factorization (NMF),
which is a traditional model in the context of audio processing.

\begin{assumption}\label{ass:1}
  We assume that the generating functions and regularization terms
  satisfy the following conditions:
  \begin{enumerate}
  \item The regularization terms $\mathcal{R}_i\colon \R^{d_i} \to \R_{\ge 0}\cup\{+\infty\}$
    are coercive, lower semi-continuous, and proper.
  \item The regularization terms $\mathcal{S}_i \colon [0,1] \to \R_{\ge 0}\cup\{+\infty\}$ are
    lower semi-continuous and proper.
  \item The evaluation mappings $\operatorname{eval} \colon \mathcal{G}_i \times \R^{d_i} \to \R^{m}$
    are continuous. 
  \end{enumerate}
\end{assumption}

Given a generator $g_i \in \mathcal{G}_i$ and a signal $u \in \R^m$,
we denote
\begin{equation}\label{eq:decoder}
  h_i^*(g_i,u;\lambda) := \argmin_{h \in \R^{d_i}} \Bigl[\lVert g_i(h) - u\rVert^2 + \lambda_i \mathcal{R}_i(h)\Bigr].
\end{equation}
That is, $h_i^*(g_i,u;\lambda_i)$ is an ``optimal'' parameter for representing the
signal $u$ by the generator $g_i$, given our choice of regularization parameter $\lambda_i$.
For ease of notation, we will assume $\lambda_i$ is fixed, and instead use the notation $h_i^\ast(g_i,u)$. 
Moreover, the mapping $h_i^*(g_i,\cdot) \colon \R^m \to \R^{d_i}$ is the
encoder corresponding to the generator (or decoder) $g_i$.
In many machine learning contexts, the mapping $h_i^*(g_i,u)$ is instead trained alongside $g_i$, leading
to what is commonly called an autoencoder.
Because of the continuity of $g_i$ and the lower semi-continuity and coercivity
of $\mathcal{R}_i$, such optimal parameters $h_i^*(g_i,u)$ exist for each $g_i$ and $u$.
Without additional assumptions, however, these need not be unique,
and thus we should interpret $h_i^*(g_i,u)$ as the collection of
all solutions of~\eqref{eq:decoder}.
In order to simplify notation,
we assume in the following that we select an arbitrary one of those solutions.

Next, we denote by $\pi_i \colon \mathcal{G}_i \times \R^m \to \R^m$ the mapping
\begin{equation}\label{eq:projection}
  \pi_i(g_i,u) := g_i\bigl(h_i^*(g_i,u)).
\end{equation}
That is, $\pi_i(g_i,\cdot)$ is the composition of the generator $g_i$
with the encoder $h_i^*(g_i,\cdot)$.
Informally, we interpret $\pi_i(g_i,u)$ as the ``projection'' of
the possibly noisy signal $u$ onto the manifold generated by the model $g_i$.
Because of the non-uniqueness issues discussed above,
this projection may depend on the choice of $h_i^*(g_i,u)$.
Figure~\ref{fig:notation} illustrates the relation between generator,
encoder, and the mapping $\pi$.

\begin{figure}[t]
  \begin{tikzpicture}
    \tikzstyle{block} = [inner sep=0pt, minimum size=0cm]
    
    % Nodes for the first set of images
    \node[block] (img1) at (0,0) {\includegraphics[width=2cm]{./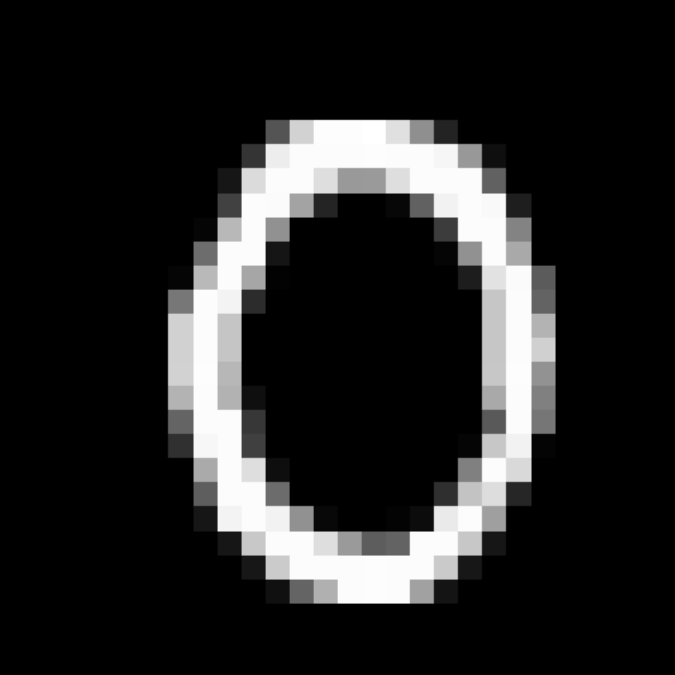}};
    \node[block] (img2) at (0,-2.5) {\includegraphics[width=2cm]{./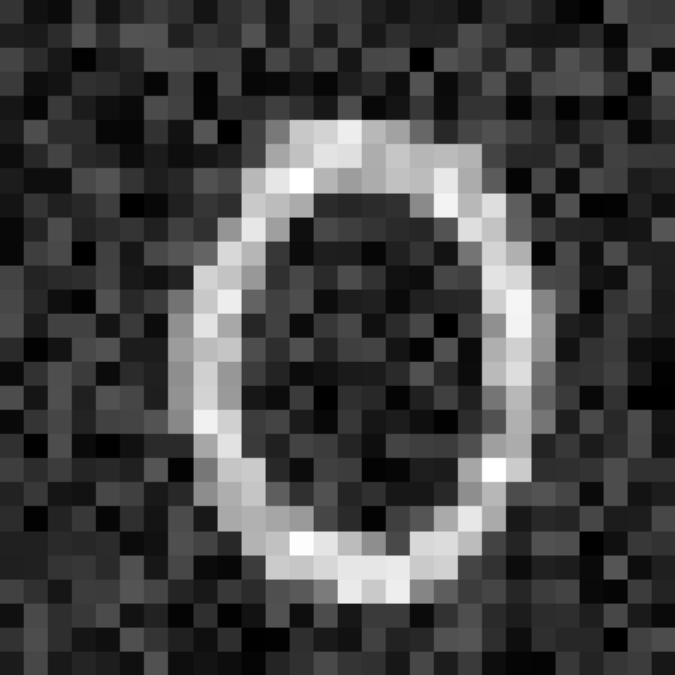}};
    
    % Nodes for the second set of images
    \node[block] (img3) at (4,0) {\includegraphics[width=2cm]{./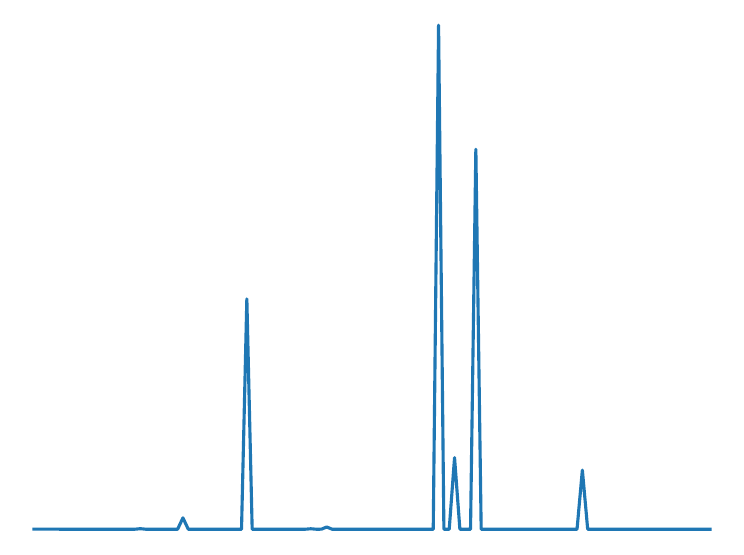}};
    \node[block] (img4) at (4,-2.5) {\includegraphics[width=2cm]{./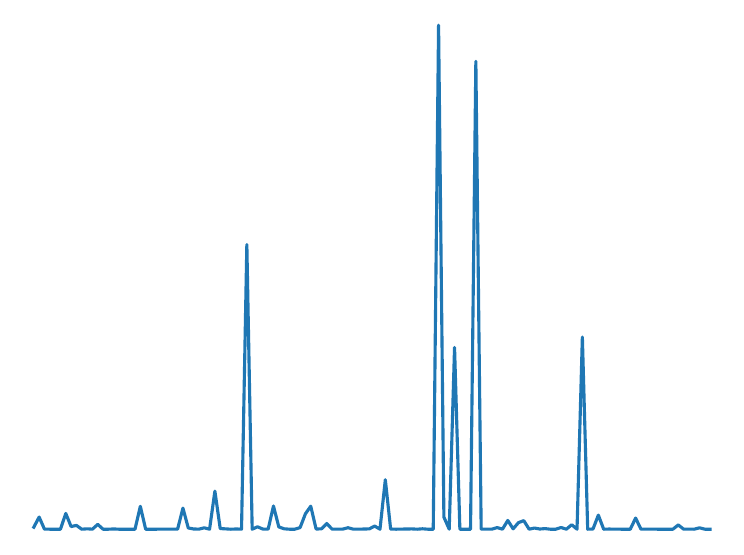}};
    
    % Node for the third set of images
    \node[block] (img5) at (8,-0) {\includegraphics[width=2cm]{./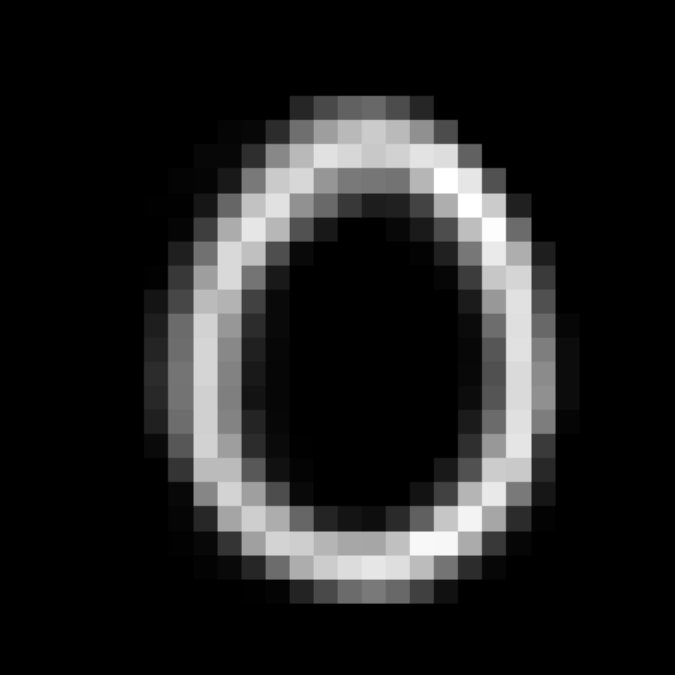}};
    \node[block] (img6) at (8,-2.5) {\includegraphics[width=2cm]{./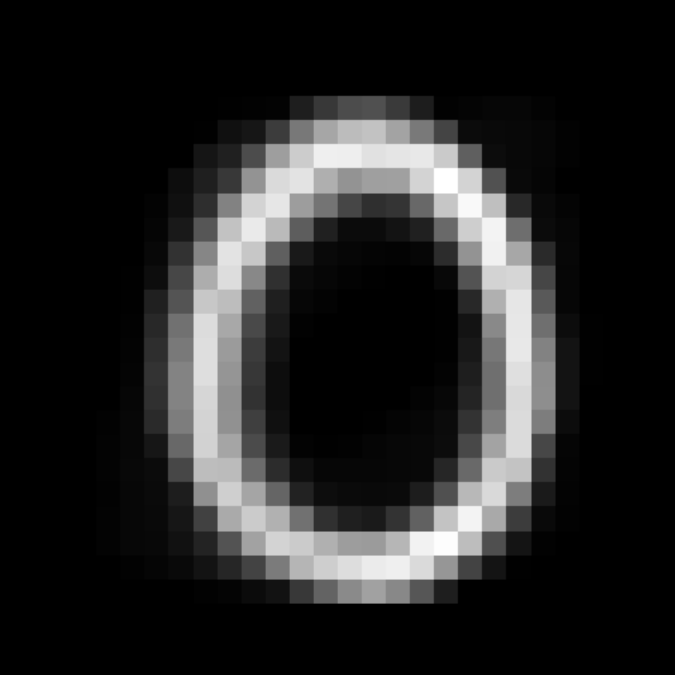}};
    
    \node (input) at (0, -4) {\large Input signals} ;
    
    \node (latent) at (4, -4) {\large Latent Representation};
    
    \node (output) at (8,-4) {\large Output signals};
    
    % Arrows and labels for the functions
    % Encoder arrow
    \draw[->] ($(img1.east)!0.5!(img2.east)$) -- ($(img3.west)!0.5!(img4.west)$) node[midway,above,align=center] {Encoder \\ $h^\ast(g,u)$};
    
    % Decoder arrow
    \draw[->] ($(img3.east)!0.5!(img4.east)$) -- ($(img5.west)!0.5!(img6.west)$) node[midway,above, align=center] {Decoder \\ $g(h^\ast(g,u))$};
    
    % Projection arrow
    \draw[->] (input.south) to[out=-30,in=210] node[midway,below] {``Projection'' $\pi(g,u)$} (output.south);
    
    \end{tikzpicture}
    \caption{A general encoder-decoder framework. Here the encoding and decoding is done using NMF with sparsity
      (see Section~\ref{sec:NMF} for details).
    While the encoding and decoding process does not perfectly reconstruct signals, especially when using low complexity models, they are still useful as a prior on signals.
    They are also robust against noise, as we see that the process yields similar outputs for a signal and a noisy version of that signal.}
    \label{fig:notation}
\end{figure}

Finally, given generators $\mathbf{g} := (g_1,\ldots,g_S)$ and a mixed signal $v \in \R^m$
we denote by
\begin{equation}\label{eq:decomposition}
  \mathbf{u}^*(\mathbf{g};v) \in \R^{m\times S}
\end{equation}
the $u$-component of the solution of~\eqref{eq:TikGen}.
That is, the columns $u_i^*$ of $\mathbf{u}^*$ represent
an optimal decomposition of the signal $v$ given the
models $g_i$ for the different sources.
Again, this decomposition need in general not be unique
and we use the notation $\mathbf{u}^* = (u_1^*,\ldots,u_S^*)$
to denote any of the minimizers.

\begin{remark}
In our work, the term ``generative model'' refers to something broader than what is common in standard 
machine learning contexts. Generative models like GANs  are trained for the purpose of generating new data like $g(h) \approx u \sim \mathbb{P}_U$, where $h$ is sampled from a suitable probability distribution.
We here consider any mapping
$g$ so that the ``distance'' $\|\pi(g,u) - u\|$ is small for relevant data $u \sim \mathbb{P}_U$, and preferably large for irrelevant data.
Thus, the data has a generating function $g$. 
We do not require $g$ to be able to  generate data on its own, but instead that the data can be represented by $g$.
In particular, the generative model can be dictionary methods and other dimensionality reduction methods.
\end{remark}

\subsection{Strong and weak supervision}\label{ss:supervision}

For the training of the generative models $g_i$, it is necessary
to have access to some type of training data for the different sources.
In the context of source separation, the data will usually be available
in one of two forms, which we term ``strongly supervised'' and ``weakly supervised'' data,
see Figure~\ref{fig:supervision} for an illustration.

\begin{wrapfigure}{r}{0.5\textwidth}
  \begin{center}
  \begin{tikzpicture}[scale=0.5,every node/.style={scale=0.5}]

    % Draw the stack of images
    \foreach \i in {5,4,...,1} {
        % Place the image
        \node[inner sep=0,anchor=south west, xshift=\i*1mm, yshift=\i*1mm] (image\i) at (0,0) {
            \includegraphics[width=2cm]{./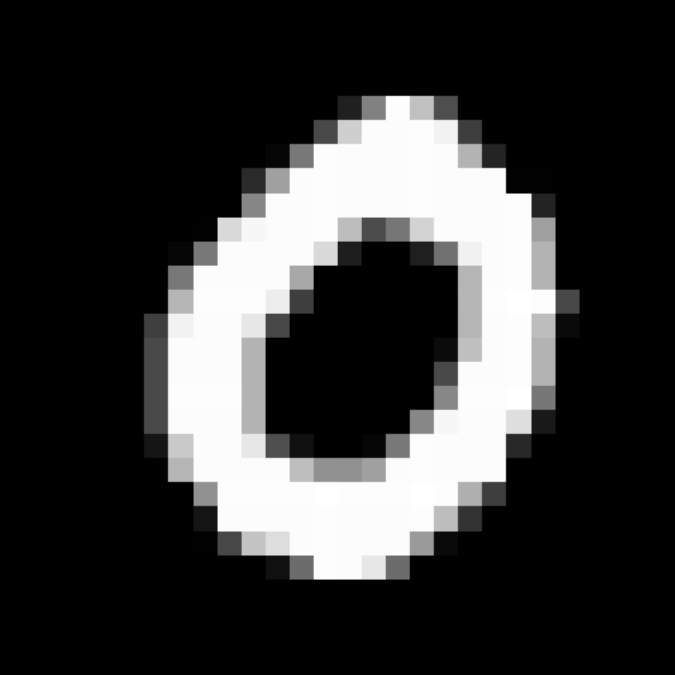}
        };
        % Draw a border around the image
        \draw[xshift=\i*1mm, yshift=\i*1mm, thin, white] (image\i.south west) rectangle (image\i.north east);
    }

    \foreach \i in {5,4,...,1} {
        % Place the image
        \node[inner sep=0,anchor=south west, xshift=\i*1mm, yshift=\i*1mm] (image\i) at (4,0) {
            \includegraphics[width=2cm]{./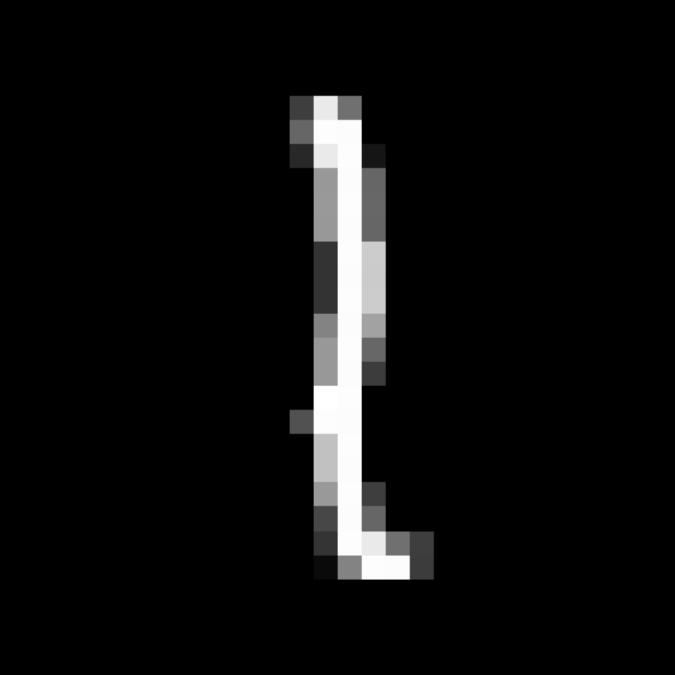}
        };
        % Draw a border around the image
        \draw[xshift=\i*1mm, yshift=\i*1mm, thin, white] (image\i.south west) rectangle (image\i.north east);
    }

    \foreach \i in {5,4,...,1} {
        % Place the image
        \node[inner sep=0,anchor=south west, xshift=\i*1mm, yshift=\i*1mm] (image\i) at (8,0) {
            \includegraphics[width=2cm]{./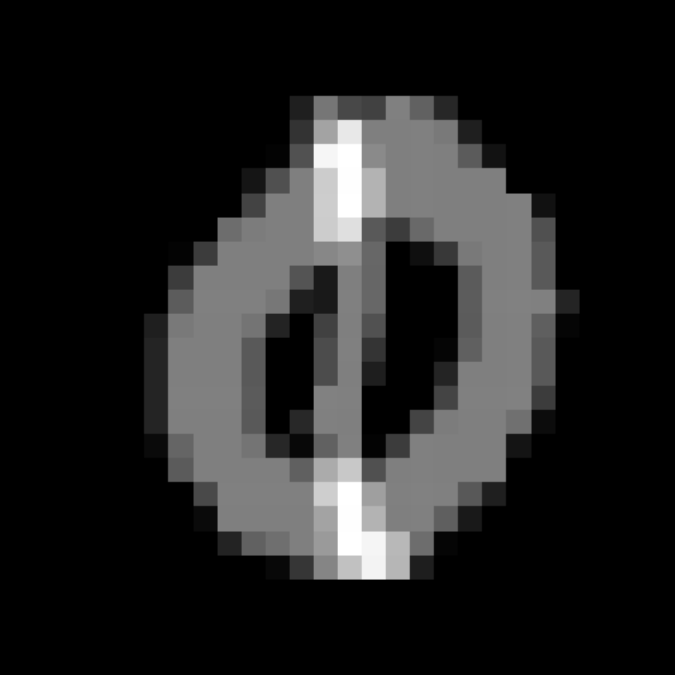}
        };
        % Draw a border around the image
        \draw[xshift=\i*1mm, yshift=\i*1mm, thin, white] (image\i.south west) rectangle (image\i.north east);
    }

    \node at (3.33,1.33) {\huge +};  
    \node at (7.33,1.33) {\huge =};  
    % Calculate the outermost points for the square
    % Assuming the last image's top right corner is the furthest point
    \path let \p1=(image5.north east) in [/utils/exec={\xdef\MaxX{\x1}\xdef\MaxY{\y1}}];

    % Draw the square around everything
    \draw[very thick] ([xshift=-1mm,yshift=-1mm]current bounding box.south west) rectangle ([xshift=1mm,yshift=10mm]\MaxX,\MaxY);

    % Add the title over the box
    \node at ([yshift=5mm]\MaxX/2,\MaxY) {\huge $\mathbb{P}_{U \times V}$};

    \node[above] at (current bounding box.north) {\huge Strong Supervision};
\end{tikzpicture}

\vspace{2mm}

\begin{tikzpicture}[scale=0.5,every node/.style={scale=0.5}]

    % Batch 1 images
    \foreach \i in {5,4,...,1} {
        \node[inner sep=0,anchor=south west, xshift=\i*1mm, yshift=\i*1mm, name = image1\i] at (0,0) {
            \includegraphics[width=2cm]{./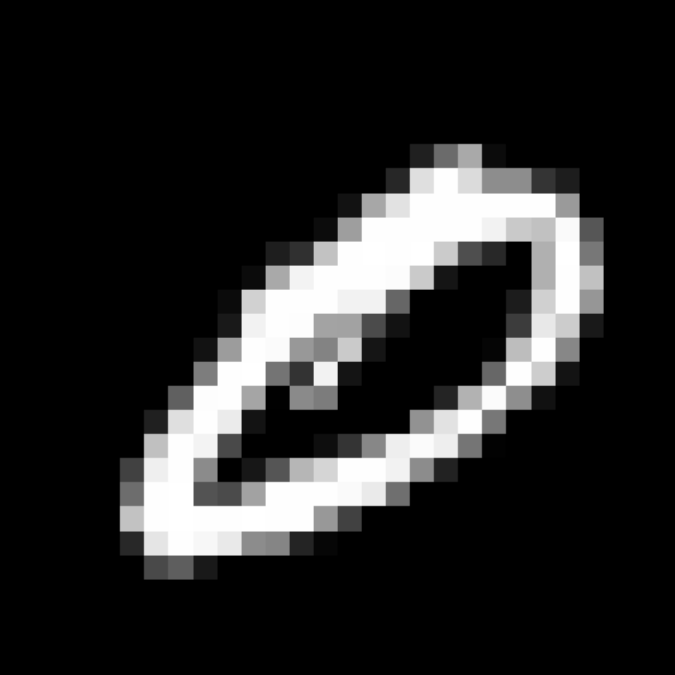}
        };
        \draw[xshift=\i*1mm, yshift=\i*1mm, thin, white] (image1\i.south west) rectangle (image1\i.north east);
    }
    % Draw box around Batch 1
    \draw[very thick] ([xshift=-0mm,yshift=0mm]0,0) rectangle ([xshift=6mm,yshift=16mm]2cm,2cm);
    % Title for Batch 1
    \node[above] at (1.33cm,2cm+6mm) {\Large $\mathbb{P}_{U_0}$};

    % Batch 2 images
    \foreach \i in {5,4,...,1} {
        \node[inner sep=0,anchor=south west, xshift=\i*1mm, yshift=\i*1mm, name=image2\i] at (4,0) {
            \includegraphics[width=2cm]{./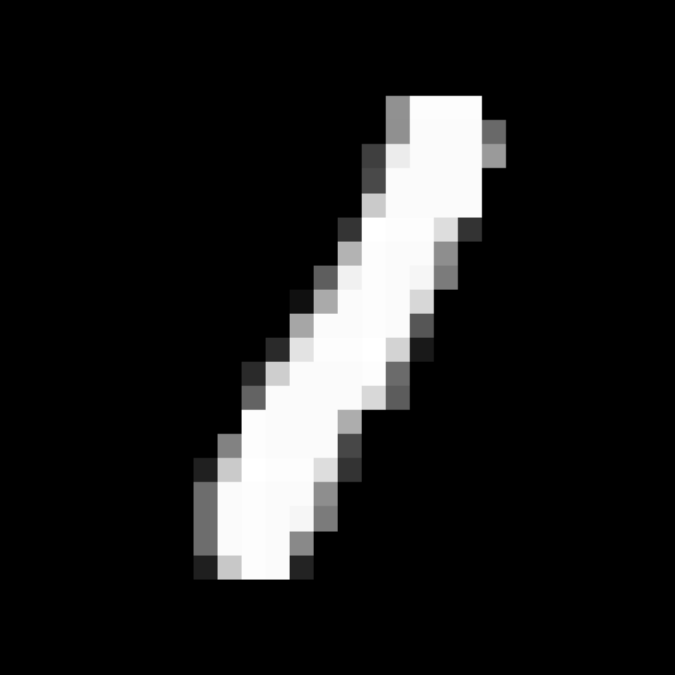}
        };
        \draw[xshift=\i*1mm, yshift=\i*1mm, thin, white] (image2\i.south west) rectangle (image2\i.north east);
    }
    % Draw box around Batch 2
    \draw[very thick] ([xshift=-0mm,yshift=0mm]4,0) rectangle ([xshift=6mm,yshift=16mm]6cm,2cm);
    % Title for Batch 2
    \node[above] at (5.33cm,2cm+6mm) {\Large $\mathbb{P}_{U_1}$};

    % Batch 3 images
    \foreach \i in {5,4,...,1} {
        \node[inner sep=0,anchor=south west, xshift=\i*1mm, yshift=\i*1mm, name=image3\i] at (8,0) {
            \includegraphics[width=2cm]{./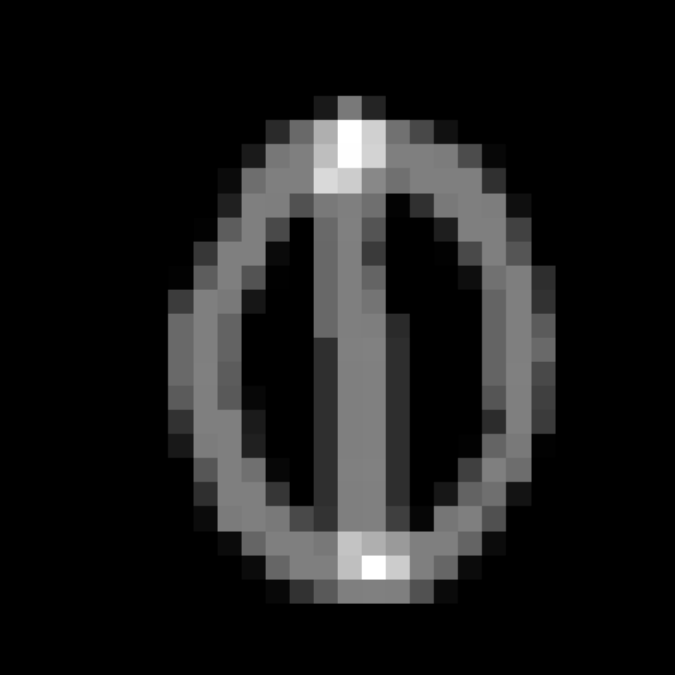}
        };
        \draw[xshift=\i*1mm, yshift=\i*1mm, thin, white] (image3\i.south west) rectangle (image3\i.north east);
    }
    % Draw box around Batch 3
    \draw[very thick] ([xshift=-0mm,yshift=0mm]8,0) rectangle ([xshift=6mm,yshift=16mm]10cm,2cm);
    % Title for Batch 3
    \node[above] at (9.33cm,2cm+6mm) {\Large$ \mathbb{P}_V$};

    \node[above] at (current bounding box.north) {\huge Weak Supervision};
\end{tikzpicture}
\end{center}
\caption{Illustration of the difference between strong supervision and weak supervision. In the strong supervision situation, all paired data is available.
In weak supervision, only data from the marginals are available, and these data are unpaired.}
\label{fig:supervision}
\end{wrapfigure}

\subsubsection*{Strongly supervised data}
This is also called \textit{paired} data.
These are samples $v^{(k)}$ of the mixed data
together with the correctly separated sources 
such that $v^{(k)} = \sum_i a_i u_i^{(k)}$. Here the superscript refers to the $k$-th data in the available dataset.
In terms of the inverse problem~\eqref{eq:SCSS}, this means
that we have samples of the right hand side $v$ together with the
true solution $\mathbf{u}$ as well as the true operator $A$.
In a probabilistic setting, we can also interpret this as having
access to samples from the \emph{joint} probability distribution
$\mathbb{P}_{\mathcal{A} \times \mathcal{U} \times V}$ of weights, sources, and mixed sources.

If a sufficiently large amount of such training data is available,
then it can be possible to train dedicated discriminative models for a specific problem.
Compared to the method we are proposing, such discriminative models
can often yield superior results. However, in practice it is
usually unrealistic to obtain a sufficient amount of strongly supervised training data.

\subsubsection*{Weakly supervised data}
This is also called \emph{semi-supervised} or \textit{unpaired} data.
There exist different interpretations of the notion of weak supervision, but
in this paper we will focus on the setting where we
have access to samples of the different sources $u_i^{(\ell)}$
and also samples $v^{(k)}$ of mixed data,
but no information about the correct separation 
$v^{(k)} = \sum_i a_i^{(k)} u_i^{(k)}$, though we might have some probabilistic model for $a_i^{(k)}$.
In many cases, one can relatively easily obtain
clean signals from the different sources as well as clean mixed signals,
whereas it is much more difficult to obtain a mixed signal together
with the correct separation into its component signals.
Thus this type of data will in practice be much more common
than strongly supervised data.

Translated to the probabilistic setting, 
we have samples of the \emph{marginal} distributions
$\mathbb{P}_{U_i}$ for the source $i$ and $\mathbb{P}_V$ for the mixed data, and in addition
some model for the distribution $\mathbb{P}_{\mathcal{A}}$ of the weights.
However, we generally cannot assume that the distributions
are independent, as this would essentially mean that
each sample from any source can be arbitrarily mixed
with any sample from a different source.
Thus, from these weakly supervised data
we can infer only little or no information about
the joint distribution $\mathbb{P}_{\mathcal{A} \times \mathcal{U} \times V}$.

In practical applications, the amount of training
data for the different sources may vary significantly.
In the extreme case, it can be even possible that
we have no data at all from one of the sources.
This is particularly relevant in denoising applications,
where we may have access to noisy and clean signals, but no access to samples of ``clean noise''.

\begin{remark}
  If we have access to weakly supervised data and can
  reasonably assume that the distributions of the individual sources
  are independent, we can generate synthetic samples of supervised
  data by using the forward model~\eqref{eq:SCSS}, provided we have information about $\mathbb{P}_{\mathcal{A}}$.
  This allows us to synthetically simulate strongly supervised data.
  This is in particular common for audio tasks, as noise can often be assumed to be independent from the audio of interest.
\end{remark}

\begin{remark}
  In this paper we will not discuss the problem of
  \emph{blind source separation}, where one only has access
  to samples of mixed data,
  but not to samples from the unmixed sources.
  This setting requires knowledge-based assumptions about the
  unmixed sources in contrast to the data-driven methods to be
  discussed here.
  For an example of a possible approach
  that is based on non-negative matrix factorization, we refer to \cite{leplat2020blind}.
\end{remark}

\subsection{Adversarial training}\label{ss:adversarial_training}

For the training of the generators $g_i$, we now assume that
we have access to weak supervision data in the form of
samples of probability distributions $\mathbb{P}_{U_i}$, $i=1,\ldots,S$,
of the clean sources, samples of the distribution $\mathbb{P}_{V}$ of the mixed signals,
and a model $\mathbb{P}_{\mathcal{A}}$ of the distribution of the weights.
In addition, we might have access to a limited amount of strong supervision
data in the form of samples of the joint distribution
$\mathbb{P}_{\mathcal{A}\times \mathcal{U}\times V}$.
Following the idea of adversarial regularization introduced in~\cite{lunz2018adversarial}
we now propose to train the generators $g_i \in \mathcal{G}_i$ by
minimizing functionals of the form
\begin{equation}
  \mathcal{F}_i(\mathbf{g})
  = \tau_W \mathcal{F}^W_i(g_i) - \tau_A \mathcal{F}^A_i(g_i) + \tau_S \mathcal{F}^S_i(\mathbf{g}).
  \label{eq:full_model}
\end{equation}
Here $\mathcal{F}^W_i$, $\mathcal{F}^A_i$, and $\mathcal{F}^S_i$ are 
cost functions for weak supervision data, adversarial data (to be discussed below),
and strong supervision data, respectively;
$\tau_W$, $\tau_A$, $\tau_S \ge 0$ are regularization parameters that balance the importance
of the weak supervision, adversarial and strong supervision terms;
and $\mathbf{g} = (g_1,\ldots,g_S)$ is the collection of the generators
for the different sources.
The goal of the weak supervision and strong supervision terms
$\mathcal{F}^W_i$ and $\mathcal{F}^S_i$ will be to obtain the
best possible fits for clean data from the different sources. 
In contrast, the goal of the adversarial term $\mathcal{F}^A_i$
is to fit certain adversarial data as bad as possible. We call $\tau_W$ the weak supervision weight,
$\tau_A$ the adversarial weight, and $\tau_S$ the strong supervision weight. 
In the case where no strong supervision data is available,
we can ignore the term $\mathcal{F}^S_i$ in~\eqref{eq:full_model};
formally this can be done by setting the regularization parameter $\tau_S$ to $0$.

We now discuss the definitions of the different terms.

\begin{itemize}
\item \emph{Weak supervision term:} 
  Here the goal is to fit data from source $i$
  as good as possible with the model $g_i$. Thus it makes sense
  to minimize the reconstruction error $\lVert \pi_i(g_i,u) - u\rVert^2$
  over all available samples $u$ from the source $i$,
  where $\pi_i$ is defined in~\eqref{eq:projection}.
  Taking into account all the available training data from source $i$,
  we thus arrive at the definition
  \begin{equation}\label{eq:FW}
    \mathcal{F}^W_i(g_i) := \mathbb{E}_{u \sim \mathbb{P}_{U_i}} \bigl(\lVert \pi_i(g_i,u) - u \rVert^2\bigr),
  \end{equation}
  where $\mathbb{E}_{u\sim\mathbb{P}_{U_i}}(\cdot)$ denotes the expectation
  with respect to the probability distribution $\mathbb{P}_{U_i}$.
  This is the only term included in standard training of generative models, where the goal is only to 
  represent data as well as possible, not necessarily to be used for downstream tasks like source separation.
\item \emph{Strong supervision term:}
  Here we are working with the available samples
  of correctly separated data $v = \sum_i a_i u_i$.
  As a cost function, it makes sense to use the mean reconstruction error
  \begin{equation}\label{eq:FS}
    \mathcal{F}^S_i(\mathbf{g}) :=
    \mathbb{E}_{(\mathbf{a},\mathbf{u};v) \sim \mathbb{P}_{\mathcal{A}\times \mathcal{U}\times V}} \bigl(\lVert u_i^*(\mathbf{g};v) - u_i \rVert^2\bigr),
  \end{equation}
  where $u_i^*$ is as defined in~\eqref{eq:decomposition}.
  In case the solution of~\eqref{eq:decomposition} is not unique,
  we use in the definition of $\mathcal{F}^S_i$ a solution
  for which $\lVert u_i^*(\mathbf{g};v)-u_i\rVert$ is minimal.  
  Note that this cost function depends not only on the generator $g_i$,
  but also the generators $g_j$ for $j \neq i$.
  If this is the only term is used, we can interpret this as a 
  particular parameterization of a discriminative end-to-end mapping, 
  especially if the encoder in equation~\eqref{eq:decoder} is trained alongside $\mathbf{g}$.
\item \emph{Adversarial term:}
  Here the goal is to fit certain \emph{adversarial data}
  as badly as possible with the model $g_i$.
  Noting that this term appears with negative sign in
  the functional $\mathcal{F}_i$, we can thus define
  \[
    \mathcal{F}^A_i(g_i) := \mathbb{E}_{u \sim \mathbb{P}_{Z_i}} \bigl(\lVert \pi_i(g_i,u) - u \rVert^2\bigr),
  \]
  where $\mathbb{P}_{Z_i}$ is the probability distribution
  of the adversarial data.

  For the definition of the adversarial data,
  we follow the argumentation of~\cite{lunz2018adversarial}
  and include data that is produced by naively inverting
  the forward operator $A$, as this approach is expected
  to preserve, or even amplify, possible noise in the data.
  For this we define
  \begin{equation}\label{eq:fi}
    f_i(a_1,\ldots,a_S;v) = \frac{a_i}{\sum_{j = 1}^S a_j^2} v,
  \end{equation}
  which is precisely the $i$-th component of $A^\dagger v$,
  where $A^\dagger$ is the pseudo-inverse of $A$.
  Denote moreover by $\mathbb{P}_{\mathcal{A}\times V}$ the
  joint distribution of weights and mixed data.
  Then we obtain the distribution $\mathbb{P}_{V_i}$ of
  the $i$-th component of naively inverted mixed data
  as
  \[
    \mathbb{P}_{V_i} := (f_i)_\# (\mathbb{P}_{\mathcal{A}\times V}),
  \]
  where $(f_i)_{\#}$ denotes the push-forward by the mapping $f_i$.
  That is,
  $\mathbb{P}_{V_i}(\Omega) = \mathbb{P}_{\mathcal{A}\times V}(f_i^{-1}(\Omega))$
  for every measurable subset $\Omega \subset \R^m$ of mixed data.
  In the weak supervision setting, we cannot generally assume
  that the joint distribution $\mathbb{P}_{\mathcal{A}\times V}$ is available.
  Lacking any better model, we will therefore assume that
  $\mathbb{P}_{\mathcal{A}}$ and $\mathbb{P}_V$ are independent, which naturally
  encapsulates the case where the weights are deterministic.
  \smallskip
  
  In addition to naively inverted mixed data, we propose
  to include data from other sources $j \neq i$
  in the adversarial data for the training of $g_i$,
  as we intuitively want the generator $g_i$ be bad at
  representing different sources.
  Thus, we define the distribution of the adversarial data as the mixture distribution
  \[
    \mathbb{P}_{Z_i} := \sum_{j\neq i} \omega_{ij} \mathbb{P}_{U_j} + \omega_{ii} \mathbb{P}_{V_i},
  \]
  where the weights $0 \le \omega_{ij} \le 1$ are chosen such that
  $\sum_{j} \omega_{ij} = 1$.
  In practice, a reasonable possibility is to weigh the different
  terms according to the amount available data.
  This leads to the adversarial distribution  
  \[
    \mathbb{P}_{Z_i} := \sum_{j\neq i} \frac{N_j}{\hat{N}_i} \mathbb{P}_{{U}_j}
    + \Bigl(1-\sum_{j\neq i} \frac{N_j}{\hat{N}_i}\Bigr)\mathbb{P}_{V_i},
  \]
  where $N_j$, $j=1,\ldots,S$, is the amount of weak supervision data from the $j$-th source,
  $N_V$ the amount of available mixed data,
  and $\hat{N}_i := N_V + \sum_{ \neq i} N_j$ the total amount of adversarial data for the $i$-th source.
\end{itemize}

Combining these terms, we arrive at the cost function
\begin{multline}\label{eq:fullF}
  \mathcal{F}_i(\mathbf{g}) =
  \tau_W\mathbb{E}_{u \sim \mathbb{P}_{U_i}} \bigl(\lVert \pi_i(g_i,u) - u \rVert^2\bigr)
  - \tau_A \mathbb{E}_{u\sim \mathbb{P}_{Z_i}} \bigl(\lVert \pi_i(g_i,u) - u \rVert^2\bigr) \\
  + \tau_S \mathbb{E}_{(\mathbf{a},\mathbf{u};v) \sim \mathbb{P}_{\mathcal{A} \times \mathcal{U}\times V}} \bigl(\lVert u_i^*(\mathbf{g};v) - u_i \rVert^2\bigr)
\end{multline}
for the training of the $i$-th generator function $g_i$.
We would like to stress that the inclusion of an adversarial term for training generative models for use in regularization
is a novel concept to our know\-ledge. We call this approach \textbf{Maximum Discrepancy Generative Regularization}.

In the case where no strong supervision data is available,
we set $\tau_S := 0$, effectively ignoring the last term.
In that case, the function $\mathcal{F}_i$ only depends on the
generator $g_i$, and thus the training of the different generators
can be performed independently.

In the presence of strong supervision data, the different
cost functions are coupled through the terms $u_i^*(g_1,\ldots,g_S;v)$
that occur in the strong supervision costs $\mathcal{F}^S_i$.
In that case, we obtain therefore the multi-objective problem
of solving
\[
  \min_{\mathbf{g}}\bigl\{\mathcal{F}_1(\mathbf{g}),\ldots,\mathcal{F}_S(\mathbf{g})\bigr\}.
\]
One approach for this is to minimize the weighted sum
\begin{equation}\label{eq:weightedsum}
  \min_{\mathbf{g}} \Bigl(\alpha_1 \mathcal{F}_1(\mathbf{g}) + \ldots + \alpha_S \mathcal{F}_S(\mathbf{g})\Bigr)
\end{equation}
with weights $\alpha_i > 0$ depending on the importance of the source $i$
as well as the available training data for that source.

We will now show that the problem~\eqref{eq:weightedsum} admits a solution.
For this, we will need a technical assumption concerning the projections $\pi_i$.

\begin{assumption}\label{ass:2}
  For all $g_i \in \mathcal{G}_i$ and $u \in \R^m$,
  the value $\lVert \pi_i(g_i,u)-u\rVert$ is independent of the choice
  of $h_i^*(g_i,u)$.
\end{assumption}

We note that Assumption~\ref{ass:2} is trivially satisfied if
$\pi_i(g_i,u)$ is unique. This is in particular the case
for Non-negative Matrix Factorization, to be discussed in Section~\ref{sec:NMF}.

\begin{theorem}
  \label{theorem:existence}
  Assume that Assumptions~\ref{ass:1} and~\ref{ass:2} hold.
  In addition, assume that the sets $\mathcal{G}_i$ are (sequentially) compact and
  that the training data have finite second moments,
  that is,
  \[
    \mathbb{E}_{u\sim\mathbb{P}_{U_i}}(\lVert u \rVert^2) < \infty
  \]
  for all $i=1,\ldots,S$ and that
  \[
    \mathbb{E}_{v \sim \mathbb{P}_V}(\lVert v \rVert^2) < \infty.
  \]
   
  Then the problem~\eqref{eq:weightedsum} admits a solution.
\end{theorem}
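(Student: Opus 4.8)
The plan is to apply the direct method of the calculus of variations to the functional
\[
\mathbf{g} \mapsto \Phi(\mathbf{g}) := \sum_{i=1}^S \alpha_i \mathcal{F}_i(\mathbf{g}).
\]
The domain $\mathcal{G}_1 \times \cdots \times \mathcal{G}_S$ is sequentially compact by hypothesis, so it suffices to show that $\Phi$ is sequentially lower semi-continuous, together with a lower bound guaranteeing that a minimizing sequence is meaningful (here the integrability hypotheses enter). Since each $\mathcal{F}_i$ is a finite linear combination of the three terms $\mathcal{F}_i^W$, $\mathcal{F}_i^A$, $\mathcal{F}_i^S$ with nonnegative coefficients, it is enough to analyze each term separately; note the adversarial term enters with a negative sign, so for that one I need \emph{upper} semi-continuity of $g_i \mapsto \mathcal{F}_i^A(g_i)$, i.e. lower semi-continuity of $-\mathcal{F}_i^A$.

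The core is a pointwise-in-data semicontinuity statement for the map $g_i \mapsto \lVert \pi_i(g_i,u)-u\rVert^2$ and for $\mathbf{g}\mapsto \lVert u_i^*(\mathbf{g};v)-u_i\rVert^2$. First I would fix a convergent sequence $g_i^{(n)} \to g_i$ in $\mathcal{G}_i$ and a fixed $u$, pick optimal latent codes $h^{(n)} := h_i^*(g_i^{(n)},u)$, and use coercivity of $\mathcal{R}_i$ to argue the $h^{(n)}$ stay bounded: indeed $\lambda_i\mathcal{R}_i(h^{(n)}) \le \lVert g_i^{(n)}(h^{(n)})-u\rVert^2 + \lambda_i\mathcal{R}_i(h^{(n)}) \le \lVert g_i^{(n)}(h^*_{\mathrm{fixed}})-u\rVert^2 + \lambda_i \mathcal{R}_i(h^*_{\mathrm{fixed}})$ for any fixed admissible reference code, and the right side is bounded because $\operatorname{eval}$ is continuous (Assumption~\ref{ass:1}(3)) on the compact set $\{g_i^{(n)}\}\cup\{g_i\}$. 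Passing to a subsequence $h^{(n)}\to \bar h$, continuity of $\operatorname{eval}$ gives $g_i^{(n)}(h^{(n)}) \to g_i(\bar h)$, and lower semi-continuity of $\mathcal{R}_i$ plus a standard $\Gamma$-convergence / recovery-sequence argument shows $\bar h$ is optimal for the limit $g_i$, whence $\lVert g_i(\bar h) - u\rVert = \lVert \pi_i(g_i,u)-u\rVert$ (well-defined by Assumption~\ref{ass:2}); combined with the reverse inequality obtained by using a recovery sequence built from an optimal code for $g_i$, this yields \emph{continuity} of $g_i \mapsto \lVert \pi_i(g_i,u)-u\rVert^2$, hence in particular handles both the $+\tau_W$ and the $-\tau_A$ terms. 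The analogous argument for $\mathbf{u}^*(\mathbf{g};v)$ uses coercivity in the $u_i$-variables coming from the fidelity term $\tfrac12\lVert A\mathbf{u}-v\rVert^2$ together with coercivity of $\mathcal{R}_i\circ h_i^*$; some care is needed because $A$ depends on the weights $a_i$, but these live in the compact set determined by $0\le a_i\le 1$, $\sum a_i =1$, and $\mathcal{S}_i$ is lower semi-continuous and proper, so the same compactness-of-minimizing-sequence reasoning applies to the inner problem~\eqref{eq:TikGen}, giving lower semi-continuity of $\mathbf{g}\mapsto \lVert u_i^*(\mathbf{g};v)-u_i\rVert^2$.

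Finally I would pass from the pointwise statements to the expectations by dominated convergence (or Fatou, for the lower-semicontinuity direction, which is all that is strictly needed). For this I need an integrable dominating function: from the inequality above, $\lVert \pi_i(g_i^{(n)},u)-u\rVert^2 \le \bigl(\lVert g_i^{(n)}(h^*_{\mathrm{fixed}})-u\rVert + \dots\bigr)^2$, and choosing the reference code independent of $u$ (e.g. a minimizer of $\mathcal{R}_i$) bounds this by $C(1+\lVert u\rVert^2)$ uniformly in $n$, which is $\mathbb{P}_{U_i}$-integrable by the finite-second-moment hypothesis; the adversarial distributions $\mathbb{P}_{Z_i}$ are mixtures of the $\mathbb{P}_{U_j}$ and of $\mathbb{P}_{V_i}=(f_i)_\#\mathbb{P}_{\mathcal{A}\times V}$, and since $f_i(a;v) = \tfrac{a_i}{\sum_j a_j^2}v$ has $\lVert f_i(a;v)\rVert \le C\lVert v\rVert$ on the weight simplex, finiteness of $\mathbb{E}_{v\sim\mathbb{P}_V}\lVert v\rVert^2$ gives the needed integrability there too; similarly $\lVert u_i^*(\mathbf{g};v)-u_i\rVert^2$ is controlled by $\lVert v\rVert^2$ and $\lVert u_i\rVert^2$. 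Then Fatou on each term (with the sign bookkeeping: for $-\tau_A\mathcal{F}_i^A$ apply the dominated-convergence / reverse-Fatou version, legitimate precisely because of the uniform integrable domination) yields $\Phi(\bar{\mathbf{g}}) \le \liminf_n \Phi(\mathbf{g}^{(n)})$ along a minimizing sequence, and compactness of the domain finishes the existence proof. I expect the main obstacle to be the $u^*$-term: establishing lower semi-continuity of the solution map of the inner variational problem~\eqref{eq:TikGen} jointly in $\mathbf{g}$, $\mathbf{a}$, and $\mathbf{u}$, and in particular correctly handling the ``select the minimizer closest to $u_i$'' tie-breaking rule so that it is compatible with taking liminfs — this requires a careful extraction of subsequences of inner minimizers and an argument that their limit is again an (appropriately selected) inner minimizer for the limiting generators.
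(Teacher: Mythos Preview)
Your proposal is correct and follows essentially the same approach as the paper's own proof: reduce existence to (lower) semi-continuity on the compact domain $\prod_i \mathcal{G}_i$, establish pointwise continuity of $g_i \mapsto \lVert\pi_i(g_i,u)-u\rVert^2$ via coercivity of $\mathcal{R}_i$, compactness of inner minimizers, and Assumption~\ref{ass:2}, then pass to expectations by dominated convergence (for $\mathcal{F}^W$, $\mathcal{F}^A$) and Fatou (for $\mathcal{F}^S$). One small correction: for the strong-supervision term the boundedness of the inner minimizers does \emph{not} come from the fidelity term $\tfrac12\lVert A\mathbf{u}-v\rVert^2$ (which is not coercive in $\mathbf{u}$), but solely from coercivity of the $\mathcal{R}_i$ on the $h_i$, after which $u_i = g_i(h_i)$ is controlled by continuity of $\operatorname{eval}$ --- and since $\lVert u_i^*(\mathbf{g};v)-u_i\rVert^2 \ge 0$, Fatou applies directly without any dominating function.
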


The proof is given in Appendix~\ref{app:proofs_1}.

\begin{remark}\label{re:compactness}
  For dictionary methods like NMF, compactness of the sets $\mathcal{G}_i$
  required in Theorem~\ref{theorem:existence}
  can be obtained by normalizing the bases;
  for neural networks, one can constrain the weights of the network.
  For GANs, this is commonly done to the discriminator with weight clipping, spectral normalization
  and gradient penalties \cite{li2023systematic}. It is less commonly
  also applied to the generator itself to stabilize training.
  As an alternative, it is possible to choose a (coercive, proper, and lower semi-continuous)
  regularization term $\mathcal{T}$ for the generators and train them by minimizing the
  regularized functional
  \[
    \min_{\mathbf{g}} \Bigl(\alpha_1 \mathcal{F}_1(\mathbf{g}) + \ldots + \alpha_S \mathcal{F}_S(\mathbf{g})
    + \mathcal{T}(\mathbf{g})
    \Bigr).
  \]
  With straightforward modifications of the proof of Theorem~\ref{theorem:existence}
  the existence of solutions of this problem can be shown as well.
\end{remark}

\section{Application to NMF}\label{sec:NMF}

Non-negative Matrix Factorization (NMF) is a matrix factorization and dictionary learning method 
traditionally used for dimensionality reduction \cite{lee1999learning}, and with notable
applications in source separation problems \cite{fevotte2018single}.
Assume that we are given training data in the form of samples
$u_i^{(k)} \in \mathbb{R}_{\ge0}^m$ for each source $i$.
The underlying assumption for NMF is that these samples can be approximately
written as non-negative linear combinations
$u_i^{(k)} \approx W_i h_i^{(k)}$ for some yet to be determined basis/dictionary
$W_i \in \mathbb{R}_{\ge 0}^{m\times d}$ and latent variables/weights $h_i^{(k)} \in \mathbb{R}_{\ge0}^d$.
Here the number of basis vectors $d$ has to be chosen a-priori and will have
a large effect on the final result, as well as the computational complexity.
It is also possible to choose a different dimension $d_i$ for each of the sources,
which can be important if, for instance, one of the source signals is vastly more
complex than the others.

One of the main drawbacks of NMF is that the underlying assumption is very restrictive
and NMF may therefore not be sufficiently expressive for real data.
One way of solving this problem is to train NMF with more basis vectors.
However, this leads to overdetermined bases that can represent signals of other sources, which in turn leads to poor separation results.
In the literature, the main way of alleviating this is using sparsity during testing and training, 
so that a signal can be represented with a small number of basis vectors
in a large dictionary \cite{le2015sparse}. 
Instead of fitting sparse NMF, we can also fit so-called Exemplar-based NMF (ENMF), where the basis vectors are simply chosen as randomly sampled signals from the training set \cite{weninger2014discriminative}.
ENMF has the benefit that it requires no training (only sampling), while still providing comparable performance for large $d_i$. 
Usually, one additionally assumes sparsity in the weights, that is, many of the components of $h_i^{(k)}$.

Since NMF is a traditional source separation method, and because of its relative low complexity,
we will use it as an application to explore the ideas of maximum discrepancy generative regularization.

\subsection{NMF as generative model for source separation}
\label{ss:NMFSS}

We will now explain how NMF can be used for single channel source separation
and how it fits into the approach described in Section~\ref{se:intro},
see also \cite{le2015sparse,weninger2014discriminative}.
We start by interpreting NMF as a generative model,
defining the generator $g_i$ as
\[
  g_i(h) := W_i h
  \qquad\text{ where }
  W_i \in \R_{\ge 0}^{m \times d_i}
  \text{ and }
  h \in \R^{d_i}_{\ge 0}.
\]
In the following, we will identify the generator $g_i$ with
the non-negative matrix $W_i$.

On a more abstract level, we can
interpret the NMF model as the assumption that the
data samples lie in the positive (sparse) cone spanned by
the columns of $W_i$.
This cone is invariant under scaling of the columns of $W_i$,
and thus we may assume without loss of generality that
they are all normalized to length $1$.

\subsubsection*{Separation of new data}

A common regularization term used in the context of NMF
is the entry-wise $1$-norm
\[
  \mathcal{R}(h) := \lvert h \rvert_1,
\]
which encourages sparsity in the representations of the data.
Assuming that we have trained generators $W_i$ for the different sources,
we can thus separate a mixed signal $v \in \mathbb{R}_{\ge 0}^m$ by solving the problem
(cf.~\eqref{eq:TikGen})
\begin{equation}
  (h_i^\ast)_{i=1}^S = \argmin_{h_i \ge 0,\, i = 1,\ldots,S}
  \Bigl\lVert v - \sum_{i = 1}^S W_i h_i\Bigr\rVert_F^2 + \lambda \sum_i \lvert h_i \rvert_1
  \label{eq:gentest}
\end{equation}
and recover the separated signals as $u_i^\ast = W_i h_i^\ast$.
Since the NMF generates signals in a convex cone,
which is invariant under scaling,
we have here ignored the possible scaling of the different sources.

For the numerical solution of~\eqref{eq:gentest}, we can concatenate the bases and the latent
variables to matrices
\begin{equation*}
  W = \begin{bmatrix}W_1 & \hdots & W_S \end{bmatrix},
  \quad
  h = \begin{bmatrix}h_1^T & \hdots & h_S^T \end{bmatrix}^T.
\end{equation*}
Then~\eqref{eq:gentest} reduces to a
non-negative least squares problem with sparsity constraints equivalent to~\eqref{eq:NMFtrainingH}.
  
Finally, it is common to apply afterwards a Wiener filter
\begin{equation}
  \label{eq:wiener}
  u_i = v \odot \frac{W_i h_i^\ast}{\sum_{j = 1}^S W_jh_j^\ast + \varepsilon},
\end{equation}
where $\odot$ denotes entry\-wise (Hadamard) product the division is interpreted entry\-wise and $\varepsilon$ is a parameter representing the magnitude of the residual $e = v - \sum_{i = 1}^S u_i$, which is assumed to follow a Gaussian distribution.
We will throughout this paper assume that this residual is near zero, and instead use $\varepsilon$ as a safe division factor.
This post-processing is crucial for NMF-based methods to perform well as it imposes data fidelty, and the constraint that
data lies on a convex cone is too restrictive for practical applications.

\subsubsection*{Weakly supervised training}

We will now discuss how NMF can be trained in the presence
of weak supervision data in the terminology of Section~\ref{ss:adversarial_training}.
Historically, this approach has been named ``supervised NMF'' \cite{fevotte2018single}, to differentiate it from using NMF for blind source separation.
We instead denote this in the following as \emph{Standard NMF} to distinguish it from NMF models that utilize strong supervision data.

To start with, we collect the samples $u_i^{(k)} \in \mathbb{R}^m$
for each source $i$ column-wise in a matrix $U_i$.
For each $i$, we then solve the bi-level problem
\begin{equation}
  \min_{W_i \ge 0} \|U_i - W_iH(W_i,U_i)\|_F^2,\label{eq:NMFtrainingW}
\end{equation}
where
\begin{equation}
  H(W_i,U_i) = \argmin_{H \ge 0} \|U_i - W_iH\|_F^2 + \lambda |H|_1.\label{eq:NMFtrainingH}
\end{equation}
Here $\lVert \cdot \rVert_F$ denotes the Frobenius norm,
$\lvert \cdot \rvert_1$ denotes the entry-wise $1$-norm,
and $\lambda > 0$ is a parameter controlling the
sparsity of the matrices $H_i$.
It is also possible to add a further regularization term
for the generators $W_i$ in order to, for instance,
enforce representations with sparse vectors only.
Also, it is possible to choose different regularization parameters
for the different sources.
With the notation of Section~\ref{ss:adversarial_training},
the training problem~\eqref{eq:NMFtrainingW} is the same as
minimizing the weak supervision terms $\mathcal{F}_i^W$ defined in~\eqref{eq:FW}.
We note that this bi-level formulation is not standard in NMF literature.

\medskip

In the case where we only have samples from the sources $1,\ldots,S-1$,
but not from source $S$, the training is slightly different:
For the training of the bases $W_i$, $i=1,\ldots,S-1$, of the given
sources we proceed as above.
Then, however, we estimate a basis $W_S$ for the last source
by trying to fit the given samples to mixed data.
To that end, we store the mixed data column\-wise in a matrix $V$
and compute $W_S$ by solving
\begin{equation}
  \min_{\substack{W_S \ge 0,\\H_i\ge 0,\, i=1,\ldots,S}}
  \Bigl\lVert V - W_S H_S - \sum_{i=1}^{S-1} W_i H_i\Bigr\lVert_F^2
  + \lambda\sum_{i=1}^S \lvert H_i\rvert_1.
  \label{eq:semi}
\end{equation}
How well $W_S$ is able to approximate the unknown source $S$ then depends on the
quality of the other bases $W_i$, $i=1,\ldots,S-1$, as well as the amount
of available mixed data.

We can also extend this approach to the case where strong supervision data is available
by including a term as defined in~\eqref{eq:FS}.
Such a generalization has been proposed by other authors \cite{weninger2014discriminative} under the name \emph{Discriminative NMF} (DNMF).
To be more precise, in their work, they further propose optimizing the Wiener-filtered solution given in equation~\eqref{eq:wiener}, as well as using different NMF
dictionaries for separation and for reconstruction, that is, separate dictionaries for the decoder defined in~\eqref{eq:decoder} and for the encoder.
We will, however, focus instead on the approach outlined in previous chapters, denoting the case when we only
use the strong supervision term as DNMF.

\subsection{Maximum Discrepancy NMF}
\label{sec:ANMF}

We now combine the standard approaches to the training of NMF bases
with the idea of adversarial or maximum discrepancy training
developed in Section~\ref{ss:adversarial_training}.
To start with, we consider the setting where no strong supervision data is available.
Here we obtain the problem
\begin{multline*}
  \min_{W \ge 0} \bigl(\mathcal{F}^W_i(W_i) - \mathcal{F}^A_i(W_i)\bigr) \\
  = \tau_W \mathbb{E}_{u \sim \mathbb{P}_{U_i}}[\lVert u - Wh(W,u)\rVert^2] -  \tau_A \mathbb{E}_{u \sim \mathbb{P}_{Z_i}}[\lVert u - Wh(W,u)\rVert^2] \\
  \approx \min_{W \ge 0} \frac{\tau_W}{N_i} \lVert U_i - W_iH(W_i, U_i)\rVert_F^2 - \frac{\tau_A}{\hat{N}_i} \lVert \hat{U}_i - W_iH(W_i,\hat{U}_i)\rVert_F^2.
\end{multline*}
Here the second line is obtained by Monte-Carlo integration, and $U_i$ and $\hat{U}_i$ are data stored columnwise
and sampled from $\mathbb{P}_{U_i}$ and $\mathbb{P}_{Z_i}$, respectively.
In our framework, the adversarial data is the concatenation of all the data from other sources
$U_j$, $j \neq i$, and naively inverted mixed data.
We refer to Appendix~\ref{app:scaling} for more details on the generation and sampling
of adversarial data.

We note that, for equal weights $\tau_W = \tau_A = 1$,
the quantity $-\min_{W_i \ge 0} \mathcal{F}^W_i(W_i) - \mathcal{F}^A_i(W_i)$, viewed as function of $\mathbb{P}_{U_i}$
and $\mathbb{P}_{Z_i}$,
is a form of \emph{Integral Probability Metric} (IPM),
which is a notion that generalizes the Wasserstein distance, see~\cite{muller1997integral}.
In view of the similarity to another IPM, namely the Maximum Mean Discrepancy \cite{gretton2012kernel},
we call this framework \textbf{Maximum Discrepancy NMF} (MDNMF).

An illustration of the conceptual difference between NMF and MDNMF is shown in Figure~\ref{fig:NMF_example}.

\begin{figure}
  \includegraphics[width=0.49\textwidth]{./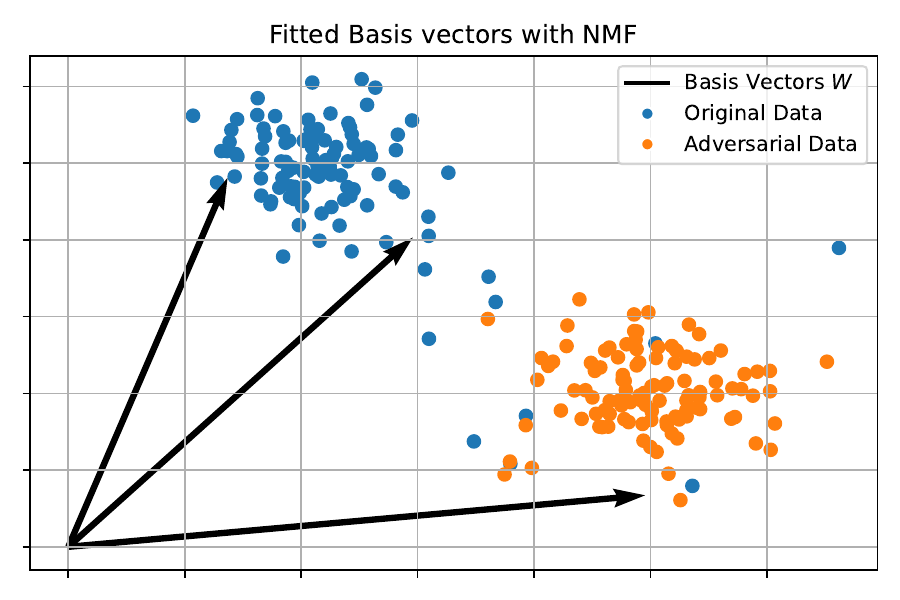}
  \includegraphics[width=0.49\textwidth]{./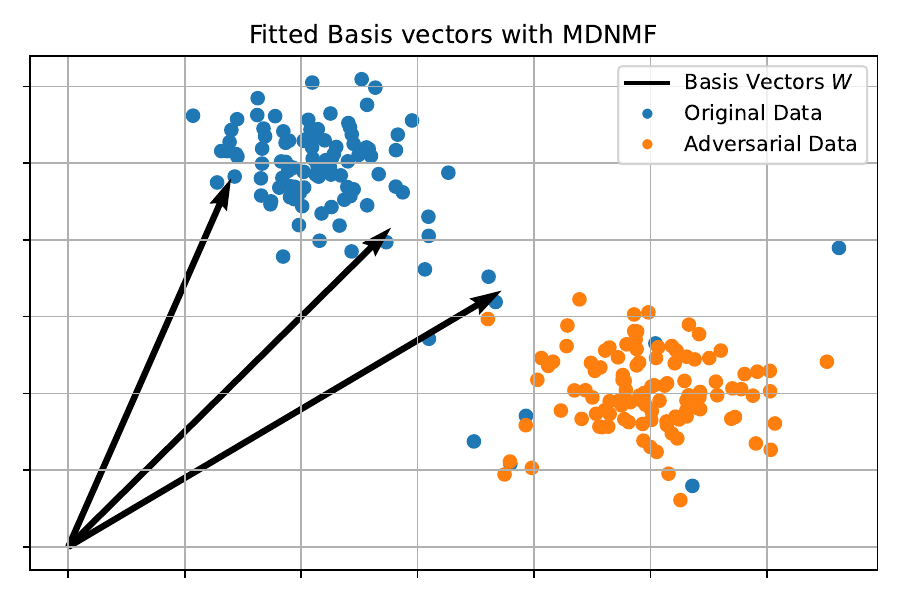}
  \caption{A figure illustrating the difference between NMF and MDNMF. Here we use $\lambda = [2\cdot10^{-2}, 2\cdot10^{-2}, 3\cdot10^{-2}]$, $\tau_W = 1$ and $\tau_A = \sqrt{0.25}$. 
  A fitted basis with NMF can be sensitive to outliers, and the resulting
  basis can also represent adversarial data. MDNMF explicitly avoids fitting adversarial data, potentially leading to worse representation
  of outlier data. This property can be beneficial for downstream tasks like source separation.  }
  \label{fig:NMF_example}
\end{figure}

While MDNMF can be fitted with sparsity, it can also be used to select a
small number of basis vectors that yield good separation
as opposed to selecting an overdetermined sparse basis.

\subsection{Discriminative and Maximum Discrepancy NMF}
\label{sec:FNMF}

Combining MDNMF with DNMF we obtain the full model defined in equation~\eqref{eq:fullF}.
We call this approach \emph{Discriminative and Maximal Discrepancy NMF} (D+MDNMF), and it encompasses the NMF models discussed so far,
which can be recovered by setting certain parameters to $0$ or $1$,
see Table \ref{tab:NMF}.
\begin{table}[!hbt]
\centering
\begin{tabular}{c|cccc}
& NMF & MDNMF & DNMF & D+MDNMF \\ 
\hline
$\tau_W$ & $=1$ & $=1$ & $=0$ & $>0$ \\
$\tau_A$ & $=0$ & $>0$ & $=0$ & $>0$ \\ 
$\tau_S$ & $=0$ & $=0$ & $=1$ & $>0$
\end{tabular}
\caption{An illustration of the parameter values for $\tau_W$, $\tau_A$, and $\tau_S$ for the different variations of NMF. 
D+MDNMF is a superset of all methods. }
\label{tab:NMF}
\vspace{-\baselineskip} % Adjust the vertical spacing
\end{table}

This approach is most interesting in the case where only a small amount
of strong supervised data is available as compared to weak supervised data, as fitting purely adversarially or discriminatively is most likely better when a large amount of strong supervised data is available.
We also have the option of using the strong supervised data for fitting
both the weak supervision terms and strong supervision terms. For D+MDNMF, this would mean that $\tau_S$ represents the relative weight of weak supervision compared to
strong supervision fitting. This can potentially alleviate the problem of overfitting to specific data.

\section{Numerical implementation}
\label{sec:numerics}

\subsection{Multiplicative updates}

One standard approach to fitting NMF is to alternate between updates of the basis while keeping the latent variables fixed, 
and updates of the latent variables while keeping the basis fixed. This approach can usually find a local minimizer \cite{lee2000algorithms}.

A standard multiplicative update for finding $H(U,W)$ in equation~\eqref{eq:NMFtrainingH} where $U \in \mathbb{R}_+^{m \times N}$ is given by
\begin{equation}
H \leftarrow H \odot \frac{W^TU}{W^TWH + \lambda}.
\label{eq:Hupdate}
\end{equation}
Here $\lambda > 0$ is the regularization parameter, but also serves as a safe division factor \cite{lee2000algorithms}.
This update preserves non-negativity, and the loss we are optimizing for
is non-increasing under this update.
We can obtain a similar update rule for $W$, which will be introduced later, and solve the NMF training problem \eqref{eq:NMFtrainingW}
by applying alternate updates to $H$ and $W$.
Although a good alternative to \eqref{eq:Hupdate} is to use projected gradient methods, we choose to use multiplicative updates because of their simplicity.

\subsubsection*{Multiplicative update for D+MDNMF}

We denote $H_i$, $\hat{H}_i$ and $\tilde{H}_i$ as the latent variable of the weak supervision, adversarial and strong supervision data of the $i$-th source respectively.
We use corresponding notation for the data.
The full update for D+MDNMF for the $i$-th source is 
\begin{equation}
  W_i \leftarrow W_i \odot \frac{\tau_W U_iH_i^T/N_i + \tau_A W_i \hat{H}_i\hat{H}_i^T/\hat{N}_i + \tau_S \tilde{U}_i \tilde{H}_i^T/\tilde{N}}{\tau_W W_i H_iH_i^T/N_i + \tau_A\hat{U}_i\hat{H}_i^T/\hat{N}_i + \tau_S W_i \tilde{H}_i \tilde{H}_i^T/\tilde{N}_i + \gamma}.
  \label{eq:DANMFW}
\end{equation}
Here the parameter $\gamma > 0$ is a safe division factor ensuring that the denominator never vanishes;
alternatively, this can be interpreted as a sparsity regularization parameter for $W_i$.
The corresponding updates for the latent variables are given by applying equation \eqref{eq:Hupdate} to the appropriate data:
$H_i = H(U_i,W_i)$, $\hat{H}_i = H(\hat{U}_i,W_i)$ and $\tilde{H} = H(V, W)$. 
See Appendix~\ref{app:scaling} for extra notes on how to scale the adversarial data.
In addition, we normalize the columns of $W_i$ after each update step,
followed by a corresponding rescaling of the weights $H_i$.

Similar to the standard $W$ update for NMF \cite{lee2000algorithms}, the update~\eqref{eq:DANMFW} obviously preserves non-negativity, 
and we can further show that the loss we are optimizing is non-increasing with the update.
\begin{theorem}
  \label{theorem:nonincrease}
  The D+MDNMF loss defined in equation~\eqref{eq:fullF} is non-increasing under the update~\eqref{eq:DANMFW}.
\end{theorem}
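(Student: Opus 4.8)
The plan is to follow the classical majorization--minimization argument of Lee and Seung, adapted to the three-term objective. The key observation is that the D+MDNMF loss, for fixed latent variables $H_i$, $\hat H_i$, $\tilde H_i$, is a sum of three Frobenius-norm terms in $W_i$ (the weak, adversarial, and strong supervision terms), each of which is a quadratic in the entries of $W_i$. The weak and strong supervision terms enter with positive sign and are of the standard form $\lVert U - WH\rVert_F^2$, while the adversarial term enters with negative sign. Since the multiplicative update~\eqref{eq:DANMFW} only changes $W_i$, and the latent variables are recomputed afterwards by applying~\eqref{eq:Hupdate} (which is itself non-increasing for its own objective by~\cite{lee2000algorithms}), it suffices to show that~\eqref{eq:DANMFW} does not increase the loss when $H_i$, $\hat H_i$, $\tilde H_i$ are held fixed; the subsequent latent updates then only decrease the relevant blocks further. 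Actually one must be slightly careful here, because the adversarial term enters with a minus sign, so recomputing $\hat H_i$ does not obviously help; I would address this by noting that $\hat H_i$ enters~\eqref{eq:fullF} through $\pi_i$, i.e.\ as a minimizer, so by Assumption~\ref{ass:2} and the definition of $h_i^*$ the value $\lVert \hat U_i - W_i\hat H_i\rVert^2$ (plus its sparsity penalty) is in fact the minimum, hence recomputing it can only decrease that term — but since it enters with a minus sign this would \emph{increase} the loss; the resolution is that in the actual algorithm one fixes $\hat H_i$ during the $W$-update and the claimed monotonicity is for the $W$-update alone. So I would state the theorem's proof as: for fixed $H_i,\hat H_i,\tilde H_i$, the update~\eqref{eq:DANMFW} does not increase $\mathcal{F}_i$.

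First I would write out $\mathcal{F}_i(W_i)$ as a function of $W_i$ alone, expanding each Frobenius norm into its trace form, and compute the gradient with respect to $W_i$. Splitting the gradient into its positive and negative parts, $\nabla\mathcal{F}_i = [\nabla\mathcal{F}_i]_+ - [\nabla\mathcal{F}_i]_-$ (entrywise), one checks that the update~\eqref{eq:DANMFW} is exactly of the form $W_i \leftarrow W_i \odot \frac{[\nabla\mathcal{F}_i]_-}{[\nabla\mathcal{F}_i]_+}$, which is the generic shape of a multiplicative update with the correct fixed points (stationarity corresponds to the two parts being equal on the support of $W_i$). The numerator collects the ``good'' contributions — the data term $\tau_W U_iH_i^T/N_i$ and $\tau_S\tilde U_i\tilde H_i^T/\tilde N$ from the positive Frobenius terms, and the curvature term $\tau_A W_i\hat H_i\hat H_i^T/\hat N_i$ from the adversarial term — while the denominator collects the curvature terms of the positive norms and the data term of the adversarial norm. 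So the bookkeeping of which term lands in the numerator and which in the denominator is the first thing to verify carefully.

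The main step is then to construct an auxiliary (majorizing) function $G(W, W')$ with $G(W,W) = \mathcal{F}_i(W)$ and $G(W,W') \ge \mathcal{F}_i(W)$, whose minimizer in $W$ (for fixed $W'$) is precisely the update~\eqref{eq:DANMFW}. For the positive Frobenius terms one uses the standard Lee--Seung auxiliary function built from the convexity of $t\mapsto t^2$ via Jensen's inequality, which replaces the cross-term $(W H H^T)$-structure by a diagonal upper bound $\frac{(W'HH^T)_{ab}}{W'_{ab}}W_{ab}^2$. The delicate point is the adversarial term $-\frac{\tau_A}{\hat N_i}\lVert \hat U_i - W_i\hat H_i\rVert_F^2$: since this is concave in $W_i$, it is majorized by its tangent plane at $W'$, i.e.\ by a linear function; the linear part contributes $-2\langle \hat U_i\hat H_i^T - W'\hat H_i\hat H_i^T, W\rangle$ up to constants, which is exactly what produces the $\tau_A W_i\hat H_i\hat H_i^T$ in the numerator and the $\tau_A \hat U_i\hat H_i^T$ in the denominator after setting the derivative of $G$ to zero and solving. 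So the construction is: majorize each convex (positive) term by the Lee--Seung quadratic surrogate, majorize the concave (adversarial) term by its linearization, add the safe-division/sparsity constant $\gamma$, and then verify by direct differentiation that the unique minimizer of the resulting separable surrogate is~\eqref{eq:DANMFW}.

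I expect the main obstacle to be handling the minus sign in front of the adversarial term cleanly: one must confirm that majorizing a concave term by its tangent is legitimate here (it is, since $-\lVert\cdot\rVert^2$ is concave, so its tangent lies above it) and, crucially, that the resulting surrogate $G(\cdot,W')$ is still \emph{bounded below and coercive enough} to have a minimizer at a positive point — this is where the safe-division parameter $\gamma>0$ is essential, since it guarantees the denominator in~\eqref{eq:DANMFW} is strictly positive and the surrogate is strictly convex in each entry. A secondary point to get right is the column-normalization step mentioned after~\eqref{eq:DANMFW}: since the loss~\eqref{eq:fullF} depends on $W_i$ only through $\pi_i(W_i,\cdot) = W_i h_i^*(W_i,\cdot)$, which is invariant under rescaling columns of $W_i$ with a compensating rescaling of $H_i$, the normalization leaves $\mathcal{F}_i$ unchanged, so it does not affect monotonicity; I would state this as a short lemma or remark rather than belabor it.
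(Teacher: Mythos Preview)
Your overall strategy (majorization--minimization with a Lee--Seung style surrogate for the convex Frobenius terms) is right, and your framing of the theorem as a statement about the $W$-update with the latent variables held fixed matches what the paper actually proves. However, there is a genuine gap in the treatment of the adversarial term.

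You propose to majorize the concave term $-\tfrac{\tau_A}{\hat N_i}\lVert \hat U_i - W_i\hat H_i\rVert_F^2$ by its tangent plane. That is a valid upper bound, but the minimizer of the resulting surrogate is \emph{not} the update~\eqref{eq:DANMFW}. If you carry out your construction row by row and set the derivative to zero you obtain, entrywise,
\[
w_a \;=\; w^k_a\,\frac{(uH^T+\tilde u\tilde H^T+w^k\hat H\hat H^T)_a-(\hat u\hat H^T+\gamma)_a}{\bigl(w^k(HH^T+\tilde H\tilde H^T)\bigr)_a},
\]
i.e.\ the terms $\hat u\hat H^T$ and $\gamma$ appear \emph{subtractively in the numerator}, not additively in the denominator. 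This neither matches~\eqref{eq:DANMFW} nor preserves non-negativity. Your sentence ``which is exactly what produces \ldots\ the $\tau_A \hat U_i\hat H_i^T$ in the denominator after setting the derivative of $G$ to zero'' conflates the heuristic gradient-splitting form $W\odot [\nabla\mathcal{F}]_-/[\nabla\mathcal{F}]_+$ with the minimizer of your surrogate; these coincide only if the surrogate's curvature is chosen to match the positive part of the gradient.

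What the paper does instead is to take a \emph{quadratic} auxiliary function
\[
M(w,w^k)=L(w^k)+(w-w^k)\nabla L(w^k)^T+\tfrac{1}{2}(w-w^k)K(w^k)(w-w^k)^T
\]
with the specific diagonal curvature
\[
K(w^k)=\operatorname{diag}\!\Bigl(\frac{w^k(HH^T+\tilde H\tilde H^T)+\hat u\hat H^T+\gamma}{w^k}\Bigr).
\]
The extra piece $\operatorname{diag}\bigl((\hat u\hat H^T+\gamma)/w^k\bigr)$ is precisely the ``slack'' curvature that moves $\hat u\hat H^T+\gamma$ from a subtractive numerator into the denominator, so that the surrogate minimizer is exactly~\eqref{eq:DANMFW}. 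The majorization check then reduces to showing $K(w^k)-\nabla^2 L\succeq 0$, which splits into the two standard Lee--Seung PSD blocks for $HH^T$ and $\tilde H\tilde H^T$, plus the manifestly PSD block $\operatorname{diag}\bigl((\hat u\hat H^T+\gamma)/w^k\bigr)+\hat H\hat H^T$. So the fix is not to linearize the concave term but to add this further nonnegative quadratic slack on top of (or in place of) your tangent; equivalently, choose the diagonal curvature so that $K(w^k)w^k$ equals the full positive part of $\nabla L(w^k)$.
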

The proof is given in Appendix~\ref{app:proofs_2}.
\begin{remark}
 Theorem~\ref{theorem:nonincrease} does not imply that the update guaranteed to converge to a local minimizer of~\eqref{eq:fullF}. 
 We show numerical convergence experimentally in Appendix~\ref{app:convergence}.
\end{remark}

The asymptotic computational complexity of each iteration of D+ANMF for all bases
is of order $\mathcal{O}(dmN_\text{tot})$, where $N_\text{tot} = \sum_{i = 1}^S [N_i +\hat{N}_i + N_\text{sup}]$ is the total amount of data.
Thus the computational complexity of the updates for NMF, MDNMF, DNMF, and D+MDNMF scales at the same rate with the amount of data used.  

\subsubsection*{Semi-supervised update}

In the semi-supervised case where the $S$-th source is unknown, but we have access to the mixed data $V$, the updates for $W_S$ and the latent variables $H_i$ become
\begin{align*}
  W_S &\leftarrow W_S \odot \frac{V H_S^T/N_V}{(\sum_{i = 1}^S W_i H_i) H_S^T/N_V + \gamma}, \\
  H_i &\leftarrow H_i \odot \frac{W_i^T V}{W_i^T(\sum_{j = 1}^S W_j H_j) + \lambda}.
\end{align*}
Here the bases $W_i$, $i = 1,\ldots,S-1$, are pre-trained, and they can be trained adversarially by using the mixed data as adversarial data.

\subsection{Stochastic Multiplicative Updates}
\label{sec:multiplicative_update}

For standard NMF, each column of the data $U$ has a corresponding column in the latent variable $H$. At the start of each epoch,
we can shuffle the data in $U$, perform a corresponding shuffle of $H$, and divide the matrices column\-wise into batches $U^{(b)}$ and $H^{(b)}$ following the ideas of \cite{serizel2016mini}.
We can then successively apply the update for $W$ for data from the different batches.
We update all latent variables $H$ simultaneously instead of batch-wise in a single update.

For initialization, we primarily use exemplar-based initialization, which means that to initialize $W$ so that $U \approx WH$, we sample $d$ columns of $U$, and then initialize $H$ via projection.
This is also used in the cases where $W$ is already pre-trained, and we want to train either adversarially or discriminatively. In the cases where this is infeasible, we use randomized initialization, where we sample from a uniform distribution. 

When applying these stochastic multiplicative updates to either MDNMF or D+MDNMF, we face the challenge that we are minimizing a loss with
different terms and potentially unbalanced data, and need to select batch sizes accordingly. 
To overcome this, we select one term we are interested in fully sampling and undersample or oversample data from the other terms.
The data is shuffled and a new epoch begins when we have passed through all data of this chosen term.
For more discussion on this, see Appendix~\ref{app:numerical}.
The full proposed algorithm for fitting D+MDNMF where the same parameters are used for all sources is given in Algorithm~\ref{alg:smu}. For simplicity, we assume that
all parameters are the same for the different sources, including the number of basis vectors and the amount of data in each source.

\setcounter{algorithm}{0}
\begin{algorithm}
\caption{Stochastic Multiplicative Update for D+ANMF}
\label{alg:smu}
\begin{algorithmic}
  \STATE \textbf{Input: } $\text{epochs} \in \mathbb{N}$, $d \in \mathbb{N}$, $\lambda$, $\gamma > 0$, $\tau_W$, $\tau_A$, $\tau_S > 0$, and batch sizes.
  \STATE \textbf{Data input:} True, adversarial, and supervised datasets $U \in \mathbb{R}_+^{S \times m \times N}$, $\hat{U} \in \mathbb{R}_+^{S \times m \times \hat{N}}$, and $\tilde{U} \in \mathbb{R}_+^{S \times m \times N_{\text{sup}}}$. Supervised mixed data $\tilde{V} \in \mathbb{R}_+^{m \times N_\text{sup}}$.
  \STATE \textbf{Initialize: } $W \in \mathbb{R}_+^{m \times dS}$ randomly or exemplar-based.
  \STATE \textbf{Initialize: } Latent variables $H$, $\hat{H}$, $\tilde{H}$ either randomly or with $\eqref{eq:Hupdate}$ applied to the respective data.
  \STATE \textbf{Calculate: } Number of batches.
  \FOR{$k = 0, k < \text{epochs}$}
    \STATE Shuffle $U, \hat{U}, \tilde{U}, H, \tilde{H}, \hat{H}, \tilde{V}$.
    \STATE Update $\tilde{H}$ with \eqref{eq:Hupdate} applied to $H(\tilde{V}, W)$.
    \FOR{$i = 1, i \le S$}
      \STATE Update $H_i$ and $\hat{H}_i$ with \eqref{eq:Hupdate} applied to $H(U_i, W_i)$ and $H(\hat{U}_i,W_i)$ 
      \FOR{$b = 0, b < \text{number of batches}$}
        \STATE Update $W_i$ with \eqref{eq:DANMFW} using $U_i^{(b)}$, $H_i^{(b)}$, $\hat{U}_i^{(b)}$, $\hat{H}_i^{(b)}$, $\tilde{U}_i^{(b)}$ and $\tilde{H}_i^{(b)}$.
      \ENDFOR
    \ENDFOR
    \STATE Normalize $W$ and rescale $H$, $\hat{H}$, $\tilde{H}$.
  \ENDFOR
\end{algorithmic}
\end{algorithm}

This universal algorithm for D+MDNMF can also be used to fit NMF, MDNMF and D+MDNMF by selecting the parameters $\tau_W$, $\tau_A$, and $\tau_S$, see Table~\ref{tab:NMF}. 
It is also worth noting that we can swap the order of the loop over sources and the loop over epochs. For NMF and MDNMF this does not affect anything,
as the bases can be fitted independently of each other in parallel. This should however not be done for DNMF and D+MDNMF, as the bases should be updated concurrently for each epoch.

\subsection{Hyperparameter tuning}
\label{sec:hyperparameter}

Hyperparameter refers to a parameter that is chosen a priori and is not fitted during training. This usually needs
to be done in the presence of strong supervised data, even if the models themselves do not need to be trained with strong supervised data.
In this sense, performing hyperparameter tuning fits a discriminative model, because the model is tuned to a specific problem.

We will be interested in fitting several parameters at the same time, and we will do this where limited amounts of strong supervised data is available.
For this, we will apply a randomized search, as it is easy to implement while still being relatively efficient \cite{bergstra2012random}.
The idea of this method is to simply sample hyperparameters from predetermined probability distributions and select the parameters
that yield the best solution on the test data with or without cross-validation (CV) \cite{hastie01statisticallearning}.

For NMF and MDNMF, which only require weak supervised data, we have the option of using the strong supervised
data for fitting or only for tuning. We expect that the benefits of using more data outweighs the detriments of overfitting,
and therefore use all data for fitting. 

For DNMF and D+MDNMF, we use CV to avoid overfitting the specific strong supervised data.
For D+MDNMF, each CV fold uses all weak supervision data available for fitting.

For source separation problems, the selection of a suitable metric
to be optimized during hyperparameter tuning
is a non-trivial problem, because we have to assess the
quality of all the separated sources simultaneously.
For that, we suggest using a weighted mean of some metric of interest
(PSNR, SDR, etc.) over the sources, where the weight is chosen
based on the importance of the sources.
When the signals are of equal importance, we
will use the arithmetic mean,
and for denoising problems we will ignore the noise source
and only weigh the signal of interest. 

\section{Numerical experiments}
\label{sec:numerical}

We now want to test our proposed algorithm for both image and audio data with a few different data settings.
For all experiments we implement the algorithms in Python using NumPy \cite{harris2020array};
the code
is available in the GitHub repository \url{https://github.com/martilud/ANMF}.

\subsection{Image data}
We first test our algorithms on the famous MNIST dataset \cite{deng2012mnist}. 
This dataset consists of $70000$ $28\times28$ grayscale images of $10$ different handwritten digits. 
We treat each of the different digits as a class and attempt to separate mixed images
of two different digits from each other.

\subsubsection*{Experiment 1: Data Rich, Strong Supervised Setting}

We investigate first a setting with an abundance of strong supervision data,
which make discriminative models like DNMF applicable.
We use $N_{\text{sup}} = 5000$ data-points and synthetically generate strong supervised data with ``zero'' and ``one'' digits with deterministic weights $a_0 = a_1 = 0.5$.
Similarly, we create $N_{\text{test}} = 1000$ data points that will be used for testing. 
We select the same number $d$ of basis vectors for both sources, and test different values.
We select the sparsity parameter as $\lambda = 1\mathrm{e}-2$ and choose
the safe division factor $\gamma = 1\mathrm{e}-10$, as this choice worked well for all methods.

The results are illustrated in Figure~\ref{fig:data_rich}. We see that MDNMF outperforms all methods, and
performance increases with the number of basis vectors $d$. 
\begin{figure}[!htb]
\centering
\includegraphics[width = 0.8\textwidth]{./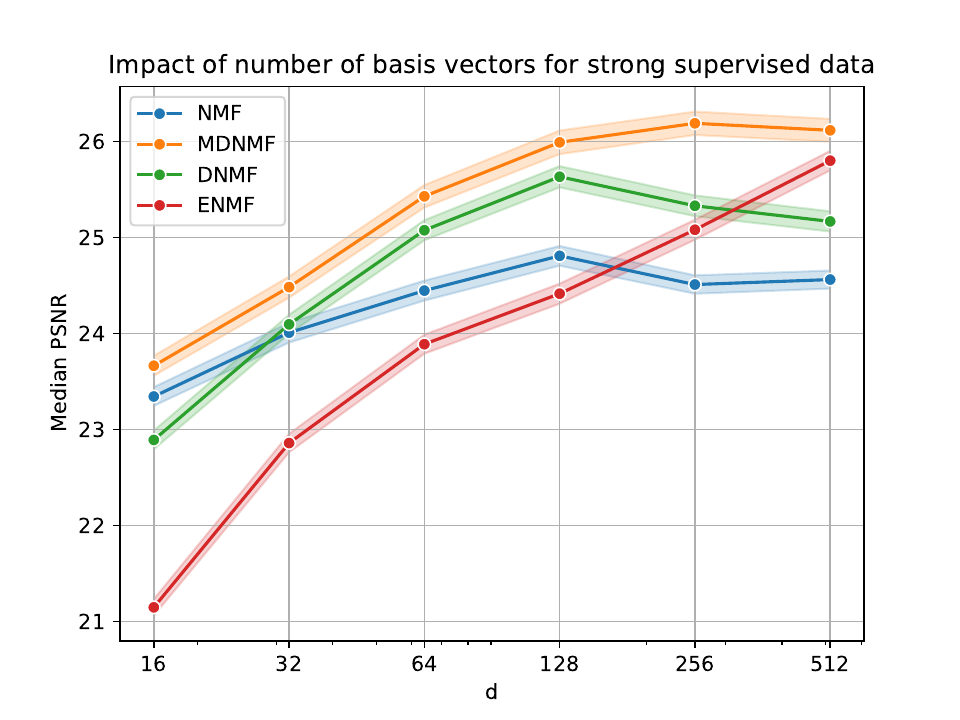}
\caption{Results from experiments in a data rich strong supervised setting. The lines show the median PSNR of the reconstructions of the methods applied to the test dataset, along with the standard error.
For MDNMF we use $\tau_W = 1$ and $\tau_A = 0.2$.
We note that performance tends to improve as the number $d$ of basis vectors increases, and that MDNMF consistently outperforms the other methods.}
\label{fig:data_rich}
\vspace{-\baselineskip} % Adjust the vertical spacing
\end{figure}

Surprisingly, MDNMF outperforms DNMF in this strong supervision setting,
even though it does not explicitly utilize the fact that we have strong supervision data, which DNMF explicitly does.
We predict that, given enough data, DNMF can potentially find a global minimizer that outperforms MDNMF, though this does not practically happen in our experiments.

One beneficial property of MDNMF is that it performs model selection in the sense that as $d$ increases, additional basis vectors are chosen to be in the span of the existing ones,
and these can potentially be removed after fitting. This is seen in Figure~\ref{fig:data_rich}, as performance plateaus for high $d$ for MDNMF. It thus selects a ``sharp'' cone and can potentially be used to select optimal $d$. 

We observe that ENMF can outperform NMF for large $d$. This is remarkable, as ENMF requires
virtually no training, only sampling.
However, we are mainly interested in the case of lower values of $d$,
where ENMF performs poorly, we will omit the results of ENMF in the further experiments.

We now investigate how the performance is affected by the value of the parameter $\tau_A$. We run the same experiment,
except this time we vary $\tau_A$ and focus only on MDNMF. The results are shown in Figure~\ref{fig:data_rich_tau}.
They indicate that, while a good selection of the parameter $\tau_A$ is crucial for performance,
there is a relatively large range of values for which the performance is acceptable.
Noting that the case $\tau_A = 0$ corresponds to standard NMF,
we also see that the performance benefit of using MDNMF over NMF increases with model complexity.

\begin{figure}[!htb]
\centering
\includegraphics[width = 0.8\textwidth]{./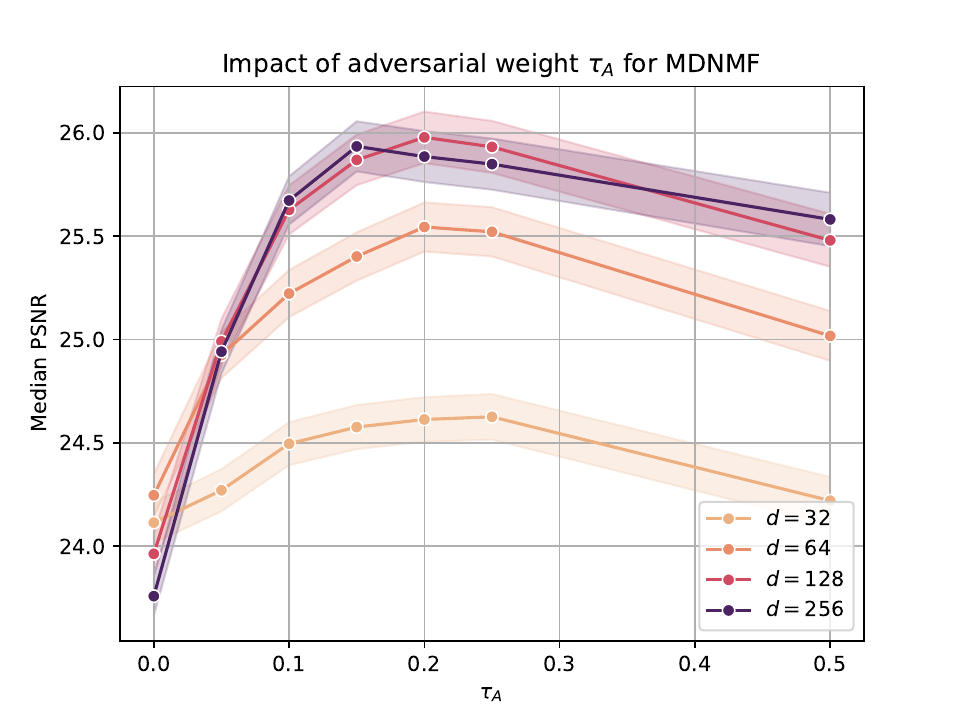}
\caption{Results from experiments in data rich strong supervised setting with $\tau_W = 1$ and varying $\tau_A$ for MDNMF.
The lines show the median PSNR over the test dataset for different parameter values, along with the standard error.
We note that $\tau_A = 0$ corresponds with standard NMF. We see that selecting $\tau_A$ too large leads to much worse performance,
but there is a relatively large range of parameters that yield better performance than standard NMF. We also find that
the discrepancy in performance between NMF and MDNMF becomes larger as $d$ increases.}
\label{fig:data_rich_tau}
\vspace{-\baselineskip} % Adjust the vertical spacing
\end{figure}

An example of the separation results for a mixed image is shown in Figure~\ref{fig:data_rich_imgs}.
We see that MDNMF and DNMF appear to be better at learning which features belong to the different images.
While standard NMF only learns a set of features that can be used to reconstruct the images, MDNMF and DNMF also learn
what features do not belong to that class of images.
The result of this is that these methods have a smaller tendency to have features of
one source appear in another source, though this comes at the cost of losing some reconstruction accuracy for the relevant data, in particular for outlier data.
Depending on the application, this ability to properly discern what feature belong to which source can be of higher importance than the overall reconstruction quality.

\begin{figure*}[!htb]
\centering
\begin{tabular}{@{\hspace{-1mm}}c@{\hspace{-1mm}}c@{\hspace{-1mm}}c@{\hspace{-1mm}}c@{\hspace{-1mm}}c@{\hspace{-1mm}}}
  \textbf{Mix}
  & \textbf{GT} 
  & \textbf{NMF}  
  & \textbf{MDNMF}
  & \textbf{DNMF}\\
    \includegraphics[width=0.15\textwidth]{./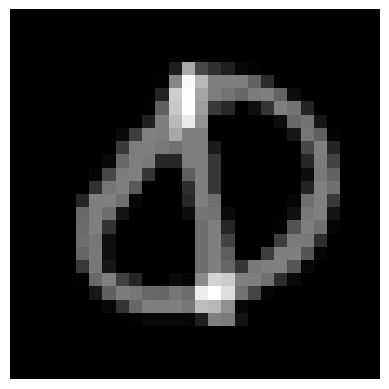}
  & \includegraphics[width=0.15\textwidth]{./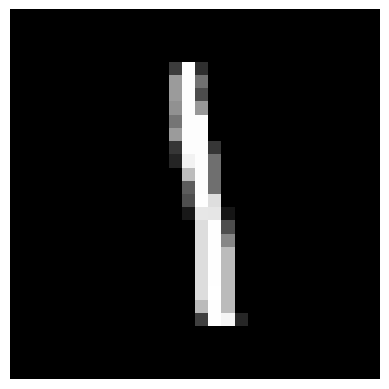}
  & \includegraphics[width=0.15\textwidth]{./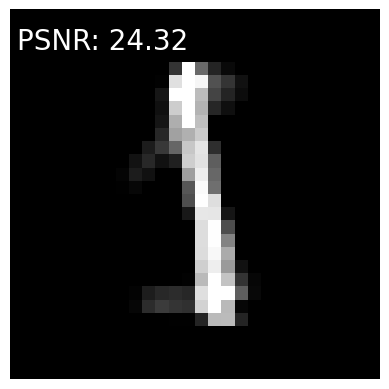}
  & \includegraphics[width=0.15\textwidth]{./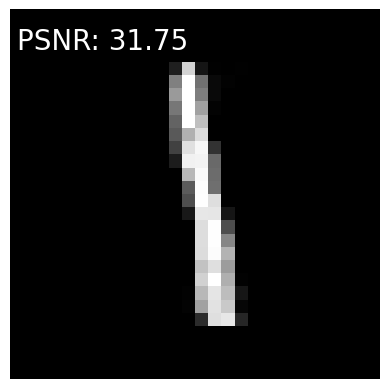}
  & \includegraphics[width=0.15\textwidth]{./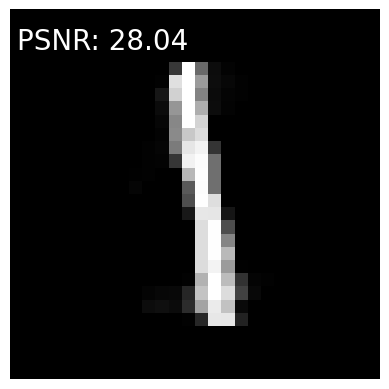} \\
  & \includegraphics[width=0.15\textwidth]{./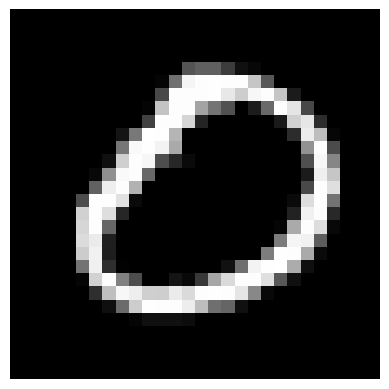} 
  & \includegraphics[width=0.15\textwidth]{./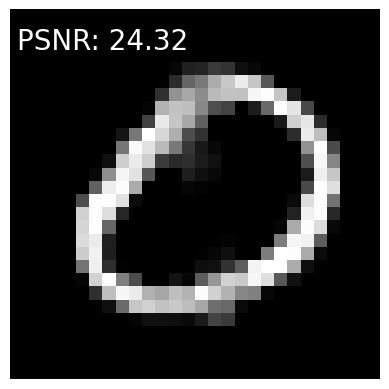} 
  & \includegraphics[width=0.15\textwidth]{./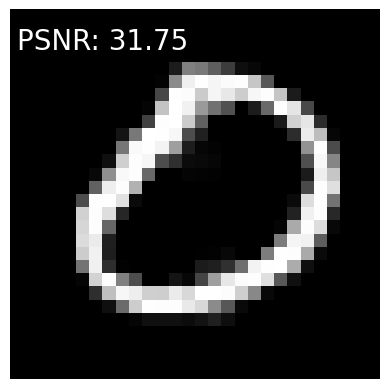} 
  & \includegraphics[width=0.15\textwidth]{./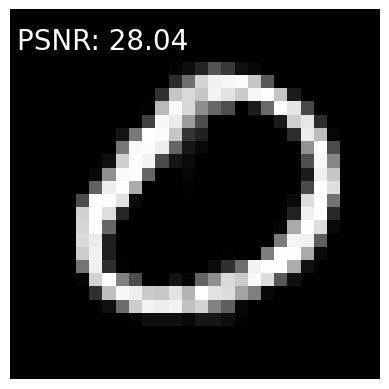} 
\end{tabular}
\centering
\caption{Example separation on test data for bases trained in data rich strong supervised setting with $d = 128$, $\tau_W = 1$, and $\tau_A = 0.2$. 
All images are plotted independently so that the brightest pixel corresponds with the largest pixel value. 
The given PSNR value is the PSNR between the reconstruction and the true source data.
All separations carry some artifacts from the Wiener-filtering around the areas where the mixed images overlap. 
Standard NMF performs especially poorly and the separated images have some clear artifacts. We see this to a much smaller degree
for MDNMF and DNMF, and they are qualitatively similar.}
\label{fig:data_rich_imgs}
\vspace{-\baselineskip} % Adjust the vertical spacing
\end{figure*}

\subsubsection*{Experiment 2: Sparse data setting}

We now investigate the behavior of the methods in a more realistic, sparser data setting. 
We set the amount of strong supervised data to $N_\text{sup} = 250$
and the amount of weak supervised data for each source to $N_i  = 500$.
This emulates a setting where obtaining strong supervised data is more difficult
than obtaining weak supervision data.
We also generate $N_\text{test} = 1000$ test data, which is not available during training, but will be used to compare the models.
The goal is to investigate how to best utilize this data to fit NMF bases.
We will attempt this by doing hyperparameter tuning as described in section~\ref{sec:hyperparameter}.

The parameters that need to be tuned for the different methods are shown in Table~\ref{tab:tune}.
\begin{table}[h]
\begin{tabular}{c|ccccc}
  \textbf{Parameters}  &  \textbf{NMF} & \textbf{DNMF} & \textbf{MDNMF} & \textbf{D+MDNMF}  \\
  \hline
  \# Basis vectors $d$                          & \cmark & \cmark & \cmark & \cmark \\
  Parameters $\lambda$, $\gamma$                     & \cmark & \cmark & \cmark & \cmark \\
  Test epochs                 & \cmark & \cmark & \cmark & \cmark \\
  Training epochs                      & \cmark & \cmark & \cmark & \cmark \\
  Batch sizes                          & \cmark & \cmark & \cmark & \cmark \\
  Adversarial weight $\tau_A$                             &        &        & \cmark & \cmark \\
  Supervision weight $\tau_S$                             &        &        &        & \cmark
\end{tabular}
\caption{Parameters that need to be tuned for different versions of NMF. Most of these parameters can also be tuned separately for each source. The parameter $\tau_W$ can always be set to $1$.}
\label{tab:tune}
\vspace{-\baselineskip} % Adjust the vertical spacing
\end{table}

We choose not to tune $d$, as we saw in Figures~\ref{fig:data_rich} and~\ref{fig:data_rich_tau} that 
results tend to improve with $d$ at the cost of computation speed and storage. We ideally
want the basis $W$ to include as few basis vectors as possible, and we choose $d=64$ for all experiments.

We also test for more classes of digits.
Images of ``one'' digits are most suited for NMF based methods, in the sense
that NMF bases trained on this digit do well in the reconstruction.
Therefore, we perform the experiment nine times, each time with a ``one'' digit mixed with 
a different digit. 
We report the difference in median PSNR between the desired
method and standard NMF, denoted $\Delta \text{Median PSNR}$.

The distributions used for the hyperparameters in the random search implementation can be found in the published code. 
For each fit we try $30$ different randomly sampled parameters. The results are shown in Figure~\ref{fig:data_poor}.

\begin{figure}[!htb]
  \centering
  \includegraphics[width = 0.8\textwidth]{./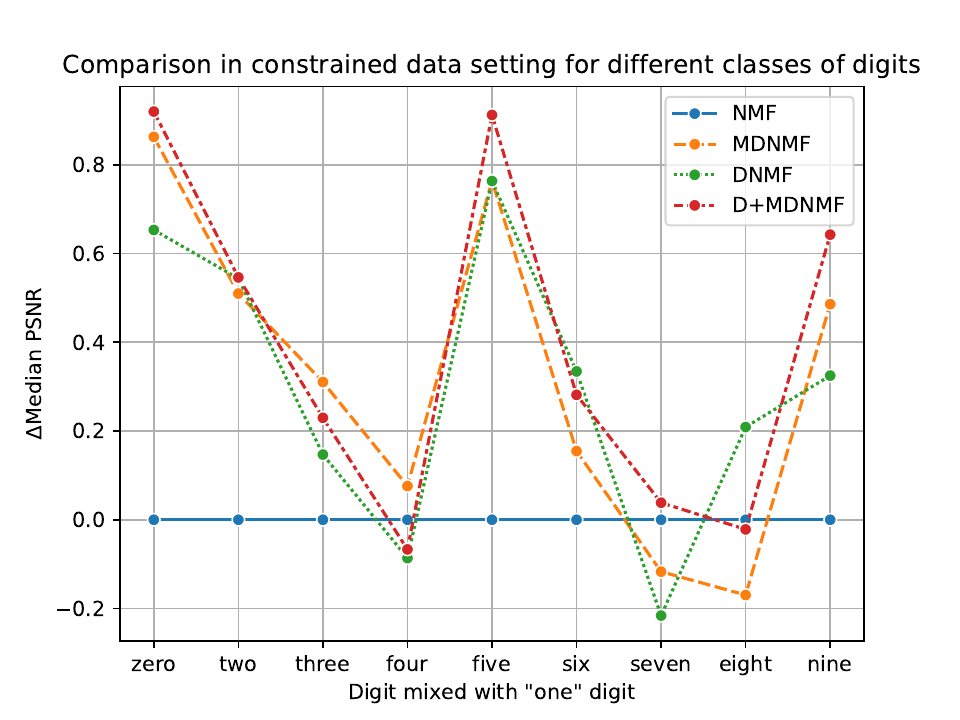}
  \caption{Results from tuning experiments with different digits in the low strong supervision data setting.
  The $y$-axis is the difference in median PSNR between the method and standard NMF.
  The digits on the $x$-axis illustrate which digit was mixed with ``one'' digits when synthetically generating data.
  We observe that there is not a large discrepancy in performance, but D+MDNMF tends to perform best in cases where there is a discrepancy between NMF and the more advanced methods.}
  \label{fig:data_poor}
  \vspace{-\baselineskip} % Adjust the vertical spacing
\end{figure}

The results seem to indicate that for some digits, or some train--test splits, there is little performance
gain from using more complex methods than standard NMF, and such methods can even overfit the data, yielding worse generalization performance.
However, D+MDNMF tends to perform better, showing that there is some potential benefit to utilizing all data in a careful way.
On the contrary, D+MDNMF also has high variance as it has the most tuning parameters.
Ideally, its performance should always be at least as good as either DNMF or MDNMF, as it is a superset of both methods, but this experiment proves that practically finding optimal parameter values and fitting optimal bases can be challenging.

\subsection{Audio data}

We will now perform a single speaker speech enhancement experiment to further exemplify the usage of 
our proposed methods.
For that, we follow the standard approach for using NMF for audio source separation \cite{fevotte2018single,weninger2014discriminative}.
Also, we only consider the semi-supervised case,
where we have clean speech recordings and noisy speech recordings with different speech from the same speaker, but no clean recordings of noise.
The goal is then to denoise these specific noisy recordings.

\subsubsection*{Data}
In order to make the experiment reproducible, we only use open-source data.
For speech, we use the LibriSpeech dataset \cite{7178964}, which contains $1000$ hours of public domain recordings
of English audiobooks recorded at $16$kHz.  
Specifically, we will use the development set of this dataset, which contains roughly $10$ minutes of speech for each of $38$ different speakers, both male and female.
For noise, we use the Musan dataset \cite{snyder2015musan}, which contains a variety of noise data, including general ambient sounds and technical noises from electronic devices
also recorded at $16$kHz. 
We split the dataset into training and testing, where the training dataset contains half
of the speech data and the test dataset contains half of the speech data with added noise.
We mix the noise additively at a constant SNR of $3$.
Because the noise data is quite varied, the perceived noise level, as well as the difficulty in denoising, varies quite a bit for different types of noise.

\subsubsection*{Feature extraction}

We extract features by first computing the Short-Time Fourier Transform (STFT) and then taking the amplitude
to obtain the spectrum of the audio. We then attempt to separate on the spectrum, and use Wiener-filtering
to reintroduce the phase before applying the Inverse STFT (ISTFT). We note that using the noisy phase for reconstructions
is a simple approach, and more advanced approaches are preferable. However, the point of these experiments are not necessarily to showcase
state-of-the-art speech enhancement, but to test how maximum discrepancy generative regularization performs compared to standard generative regularization for this task.
\begin{figure}[!htb]
\begin{tikzpicture}[scale=0.8]
  % Nodes with images
  \node (noisyAudio) at (0,0) {\includegraphics[width=2cm]{./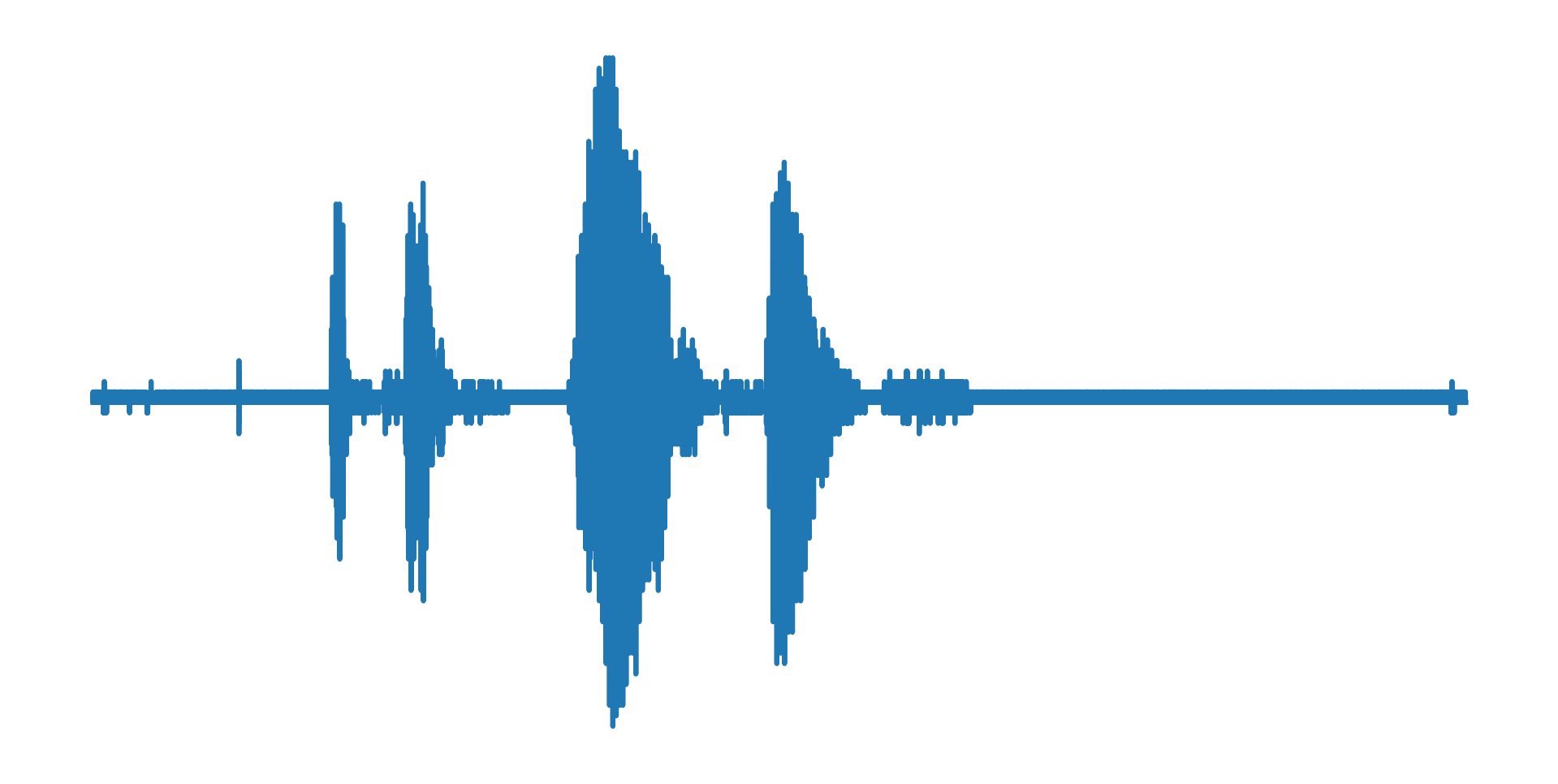}};
  \node (noisyAudioText) at (0,-1.0) {\small Noisy Audio};
  \node (noisySTFT) at (3,0) {\includegraphics[width=2cm]{./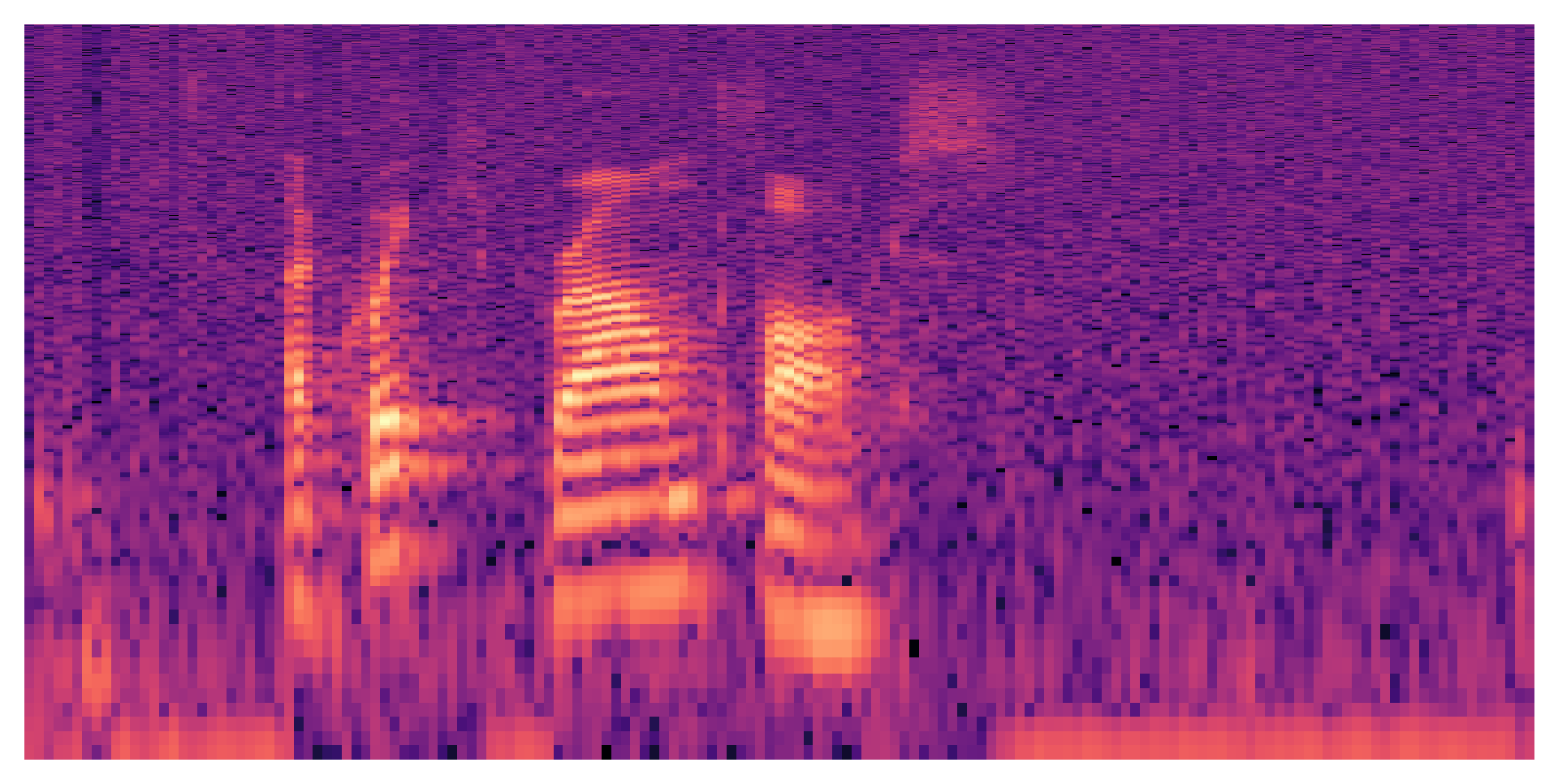}};
  \node (noisySTFTText) at (3,-1.0) {\small Noisy STFT};
  \node (unet) at (6,0) {\small Separation};
  \node (cleanSTFT) at (9,0) {\includegraphics[width=2cm]{./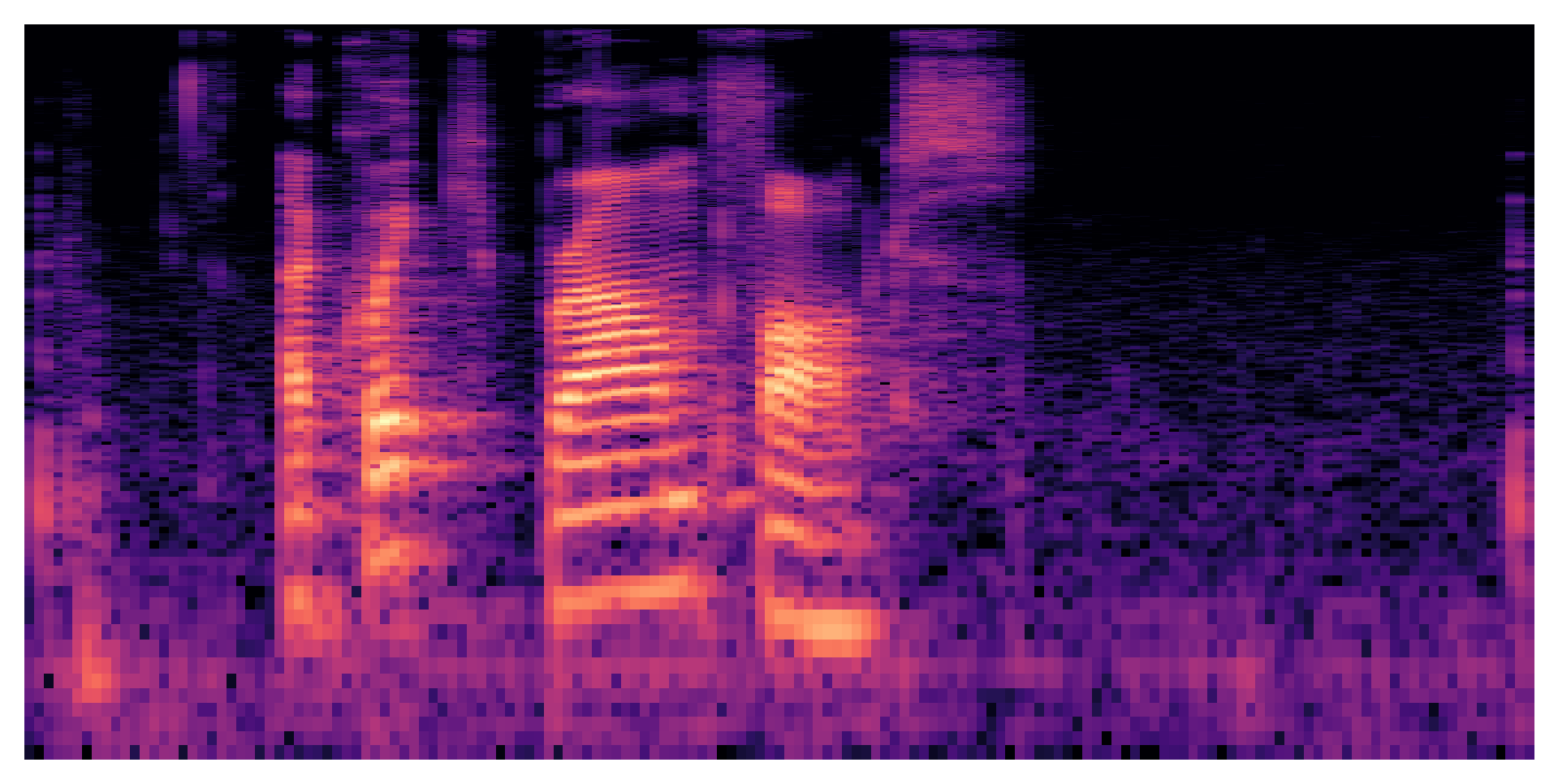}};
  \node (cleanSTFTText) at (9,-1.0) {\small Clean STFT};
  \node (cleanAudio) at (12,0) {\includegraphics[width=2cm]{./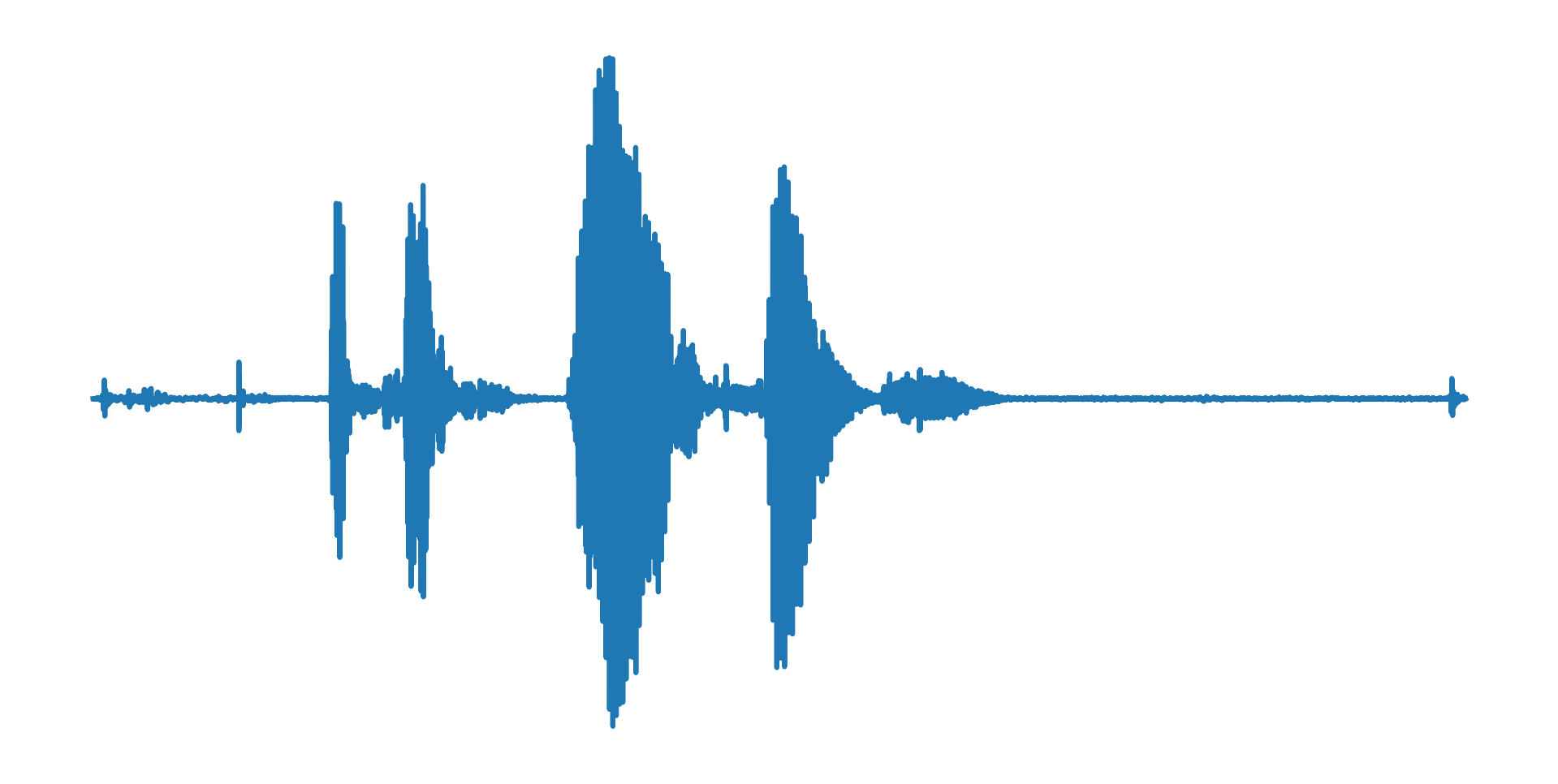}};
  \node (cleanAudioText) at (12,-1.0) {\small Clean Audio};

  % Arrows
  \draw[->] (noisyAudio) -- (noisySTFT);
  \draw[->] (cleanSTFT) -- (cleanAudio);
  \draw[->] (unet) -- (cleanSTFT);
  \draw[->] (noisySTFT) -- (unet);

  % Phase Transfer Link
  \draw[->, dashed] (noisySTFT.north) to[bend left] node[midway, above] {\small Phase Transfer} (cleanSTFT.north);
\end{tikzpicture}
\caption{Feature extraction for audio speech using the STFT. The separation is done through the proposed NMF method, and the phase transfer is done by reusing the noisy phase for the reconstructed audio. }
\end{figure}
The librosa package is used to implement the STFT \cite{mcfee2015librosa}. 
In the implementation of the Short-Time Fourier Transform (STFT), a window length and Fast Fourier Transform (FFT) size of 512 samples are utilized, equating to a latency window duration of 32 milliseconds. Furthermore, the hop length is set to ensure a 50\% overlap with adjacent frames.

\subsubsection*{Method}

For speech denoising applications, we are only concerned with reconstructing speech. We first
train a basis for the known speech signal using NMF or MDNMF with the noisy speech as adversarial data, then solve equation \eqref{eq:semi}
to obtain a basis for the unknown noisy signals. We use these bases to separate the noisy signals, and recover the speech signals.
To measure quality, we use SI-SDR \cite{le2019sdr}, which we apply to the individual audio clips. 
We repeat this experiment for different speakers, and do the training from scratch for each speaker.

\subsubsection*{Results}
The results for the audio data can be found at the repository \url{https://github.com/martilud/ANMF}.
For both sparsity and number of basis vectors, we find that selecting different values for the speech and noise is beneficial.
Specifically, we use $d = 128$ basis vectors for speech, and $d = 32$ for noise, as well as $\lambda = 10^{-3}$ for speech and $\lambda = 10^{-10}$ for noise.
The argument for this is that speech is usually complex and sparse, while noise is often not sparse, instead spread over most of the spectrum.
Furthermore, if we allow the noise basis to be too complex, it will simply learn the residual of the NMF representation of speech, though
this is much more problematic for NMF than MDNMF, and MDNMF is much less sensitive to the selection of amount of basis vectors. 
We again see that MDNMF implicitly does model selection, by setting some basis vectors to be roughly equal as $d$ increases.

For MDNMF, we select $\tau_W = 1$ and $\tau_A = 1.0$, and use the approximation described
in Appendix~\ref{app:scaling}. For the calculation of the parameter $\beta_i$
(see~\eqref{eq:beta}) we use the exact weights that were used
for mixing the noisy signals. The results are shown in Figure \ref{fig:audio_results}, where we see that MDNMF significantly outperforms NMF.
We note that in our experiments, while there were some audio clips where NMF quantitatively outperformed MDNMF, MDNMF outperformed NMF on average for all $38$ speakers.
We also show the spectrogram of the result of a specific audio clip in Figure \ref{fig:audio_example}.
\begin{figure}[!htb]
  \includegraphics[width = 0.9\textwidth]{./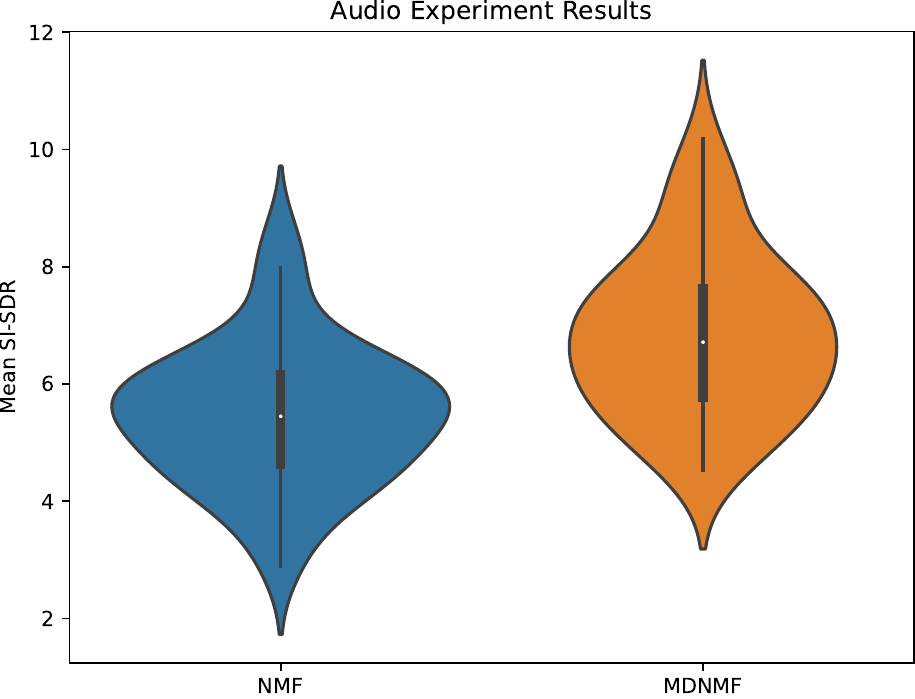}
  \caption{Results from audio experiment. We see that MDNMF consistently outperforms NMF in terms of mean SI-SDR of the different experiments for different speakers. }
  \label{fig:audio_results}
\end{figure}

\begin{figure}[!htb]
  \includegraphics[width = 0.49\textwidth]{./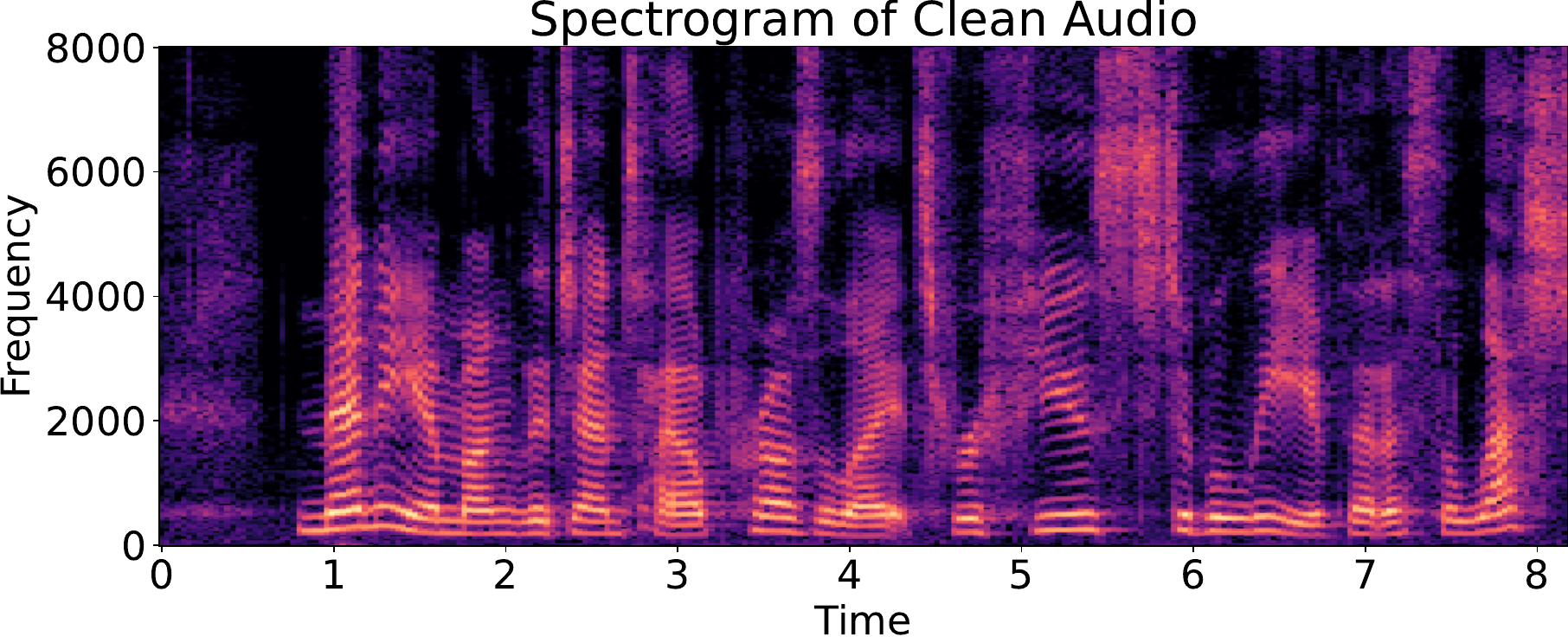}
  \includegraphics[width = 0.49\textwidth]{./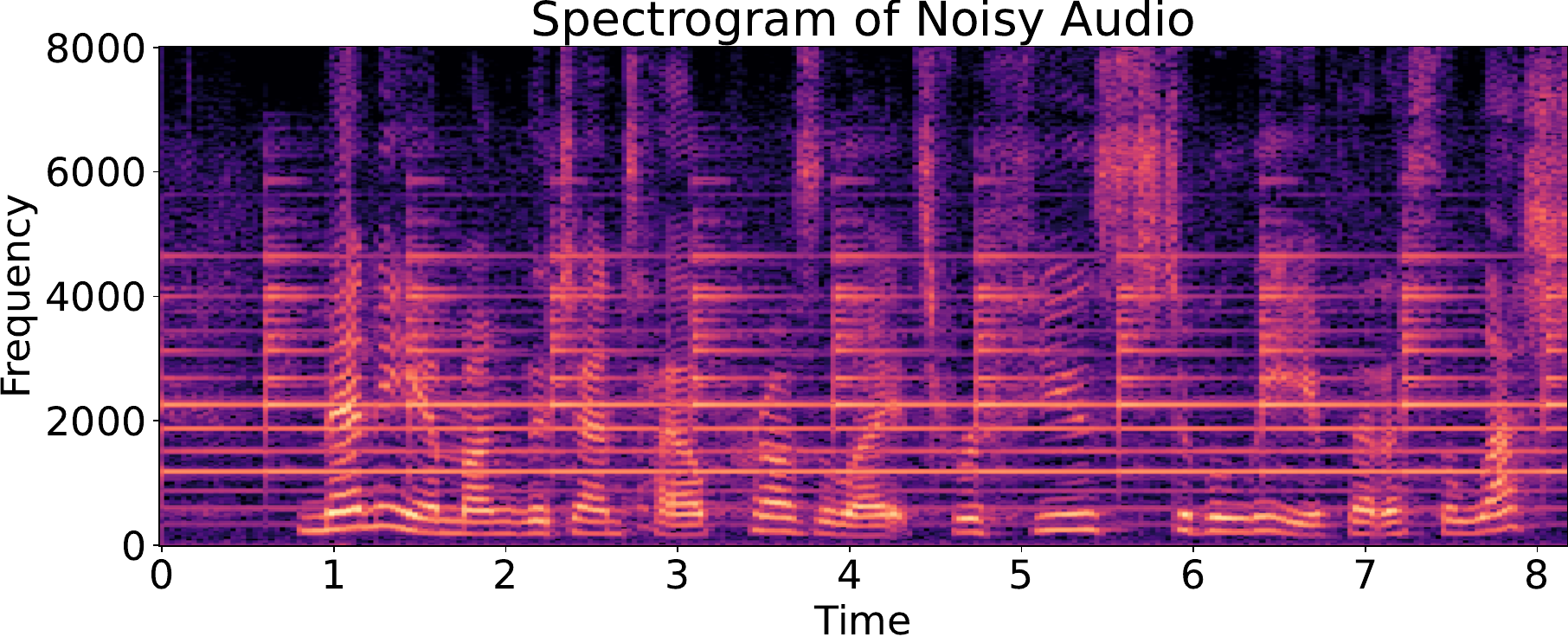}
  \includegraphics[width = 0.49\textwidth]{./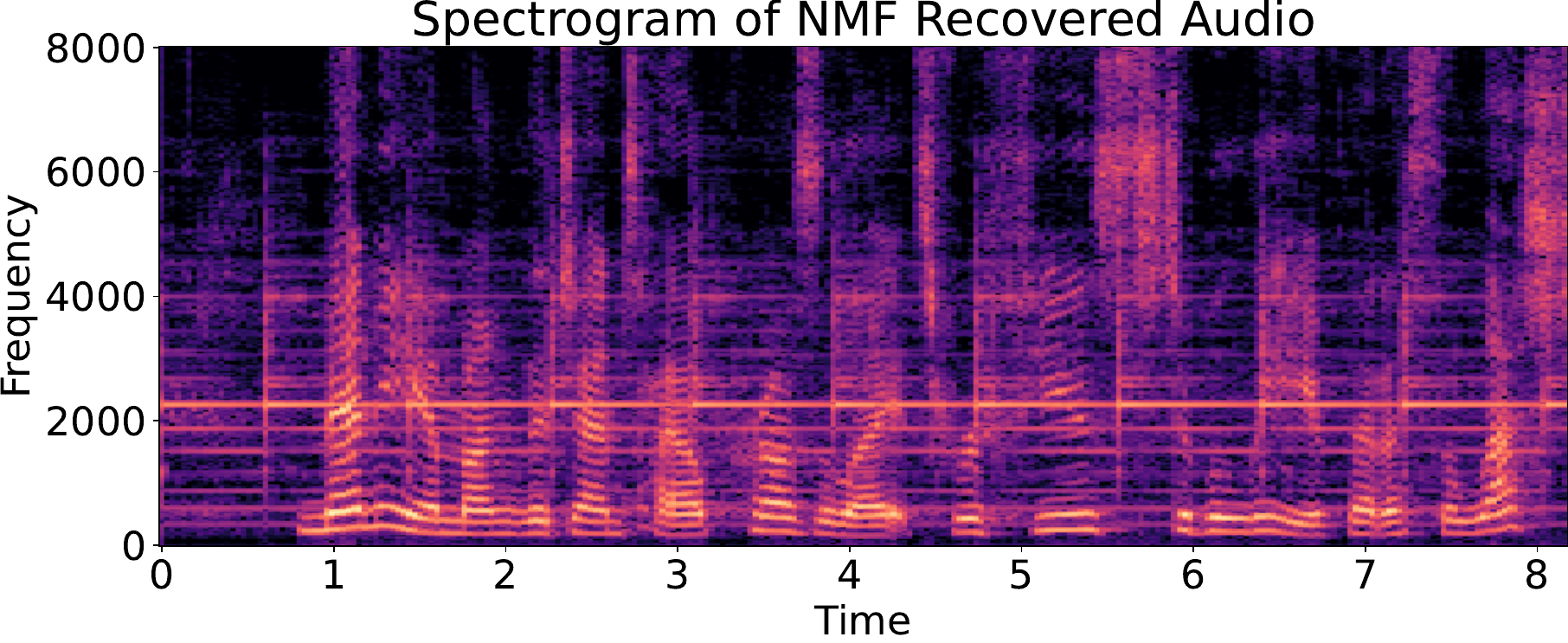}
  \includegraphics[width = 0.49\textwidth]{./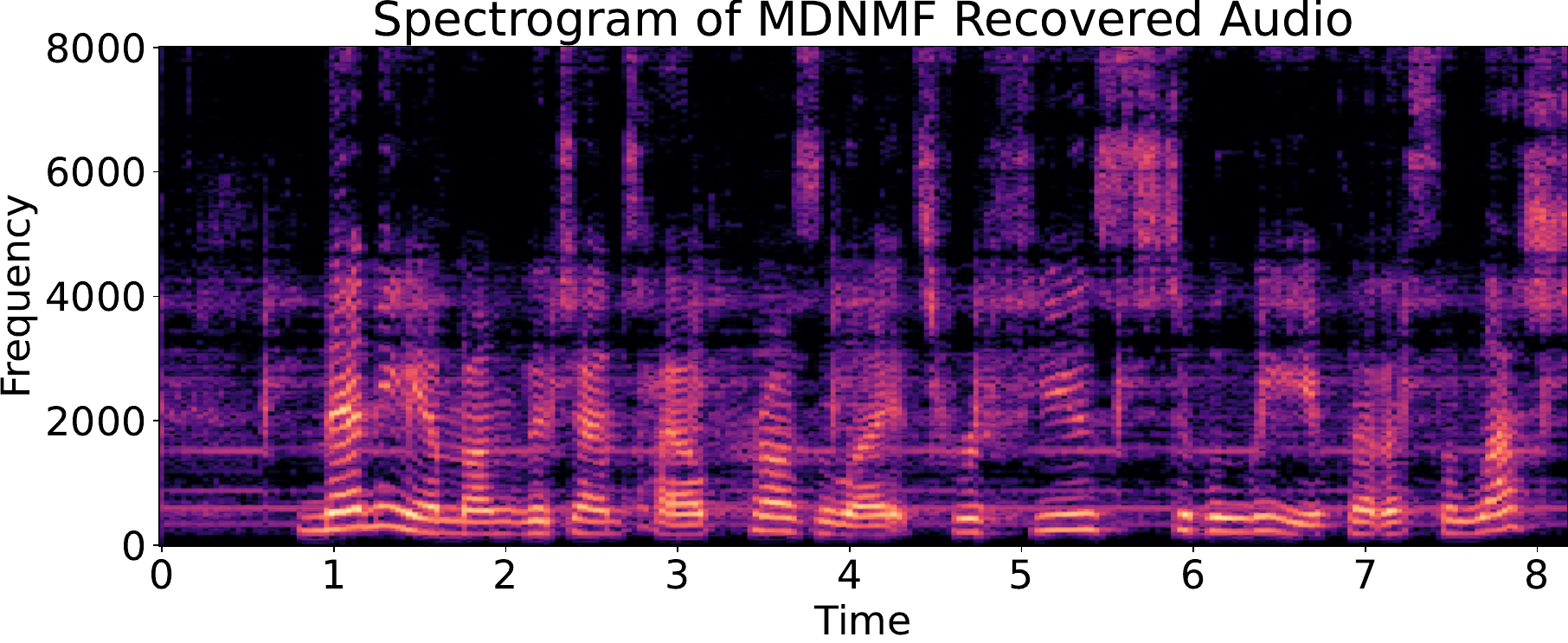}
  \caption{Example clean, noisy, NMF recovered and MDNMF recovered audio. MDNMF is clearly more capable at removing noise, though there is still some room for improvement. In particular, for this specific audio clip
  it is clear that the noise also exhibits sparsity.}
  \label{fig:audio_example}
\end{figure}

Qualitatively we note that the denoised signals produced by MDNMF are of higher or equal quality, and there is still much room for parameter tuning. A feature of MDNMF is that it consistently
removes more stationary (low-frequency) noise from the signal, as well as specific frequencies that are not required for reconstruction of speech signals.
Another feature of MDNMF is that there is a trade-off between how much noise is removed and the quality of reconstructed speech.
This means that some reconstructions where MDNMF performs quantitatively worse often remove more of the noise. 
As such, MDNMF is preferable not only because of the superior quantitative performance, but especially in situations where removal of noise
is more important than perfect reconstruction of the speech signal.
We also see that both methods perform worse at removing non-stationary noise, like background music or sharp sounds from moving objects.
We further note that using the noisy phase is clearly insufficient, and more complex phase transfer methods should be used to obtain high quality reconstructions.
In order to be able to treat this type of noise with NMF-based methods, we suspect one would need more basis vectors to fully model the complexity of the noise, as well as more data to properly fit the bases.
Another approach would be to use generative models that are better capable at modeling non-stationary noise data.

The conclusion is that MDNMF can learn compact bases that can be used for single speaker speech denoising in a semi-supervised data setting. 
We believe MDNMF would also be suitable for more large scale multi-speaker denoising and other audio applications, but this would require further investigation.

\section{Further Work}
Several potential avenues for future research are worth exploring,
including investigating maximum discrepancy generative regularization for more complex generative methods, incorporating transfer learning techniques,
and identifying better approaches for parameter tuning.
We believe that maximum discrepancy generative regularization can be a valuable tool for source separation and more general inverse problems in weak supervision data settings.
The main challenge is how to most efficiently utilize trained models and data available to achieve good and robust results.

\section{Conclusion}
In this article we have investigated maximum discrepancy generative regularization for single channel source separation. In particular, we have developed a variant of Non-Negative 
Matrix Factorization that we called Maximum Discrepancy Non-Negative Matrix Factorization (MDNMF). 
We have discussed how to utilize weak and strong supervision data for training adversarial generative functions for source separation problems.
We have seen in the numerical
experiments that MDNMF outperforms other NMF methods 
for both image and audio source separation problems,
including methods that make use of strong supervision data.

\appendix

\section{Proof of Theorem~\ref{theorem:existence}}\label{app:proofs_1}

Since the sets $\mathcal{G}_i$ are sequentially compact,
it is sufficient to show that the mappings $\mathcal{F}_i^W$ and $\mathcal{F}^A_i$ are continuous
and that the mappings $\mathcal{F}_i^S$ are lower semi-continuous.
For simplicity of notation, we drop in the following the indices $i$.

We start with the continuity of the mapping $\mathcal{F}^W$.
For this, we show first that the mapping
$g \mapsto \lVert \pi(g,y)-u\rVert^2$ is continuous.
To that end, denote
\[
  P(h,g) = \lVert g(h)-u\rVert^2 + \lambda \mathcal{R}(h).
\]
Because the mapping $(g,h) \mapsto g(h)$ is continuous and $\mathcal{R}$
is lower semi-continuous, it follows that $P$ is lower semi-continuous.

Assume now that the sequence $\{g^{(k)}\}_{k\in\N} \subset \mathcal{G}$ converges to $g_0 \in \mathcal{G}$
and let $h^{(k)}$ be minimizers of the functions $P(\cdot,g^{(k)})$.
Let moreover $\hat{h} \in \R^{d}$ be such that $\mathcal{R}(\hat{h}) < \infty$.
Then
\begin{multline*}
  \lambda\mathcal{R}(h^{(k)}) \le P(h^{(k)},g^{(k)}) \le P(\hat{h},g^{(k)}) = \lVert g^{(k)}(\hat{h})-u\rVert^2 + \lambda\mathcal{R}(\hat{h})\\
  \le \sup_{g \in \mathcal{G}} \lVert g(\hat{h})-u\rVert^2 + \lambda\mathcal{R}(\hat{h})
\end{multline*}
for all $k$.
Because $\mathcal{G}$ is sequentially compact and the mapping $g \mapsto g(\hat{h})$ is continuous,
it follows that
\[
  \sup_{g \in \mathcal{G}} \lVert g(\hat{h})-u\rVert^2 < \infty.
\]
Because $\mathcal{R}$ is coercive, it follows that the sequence $\{h^{(k)}\}_{k\in\N} \subset \R^d$
is bounded.
After possibly passing to a subsequence we may thus assume that
it converges to some $h_0 \in \R^d$.

Since $P$ is lower semi-continuous and $h^{(k)}$ minimizes
$P(\cdot,g^{(k)})$, it follows that
\[
  P(h_0,g_0) \le \liminf_{k\to \infty} P(h^{(k)},g^{(k)}) \le \liminf_{k\to\infty} P(h,g^{(k)})
\]
for all $h \in \R^d$.
Since $P$ is continuous with respect to $g$ and $g_0 = \lim_{k\to\infty} g^{(k)}$,
it follows that $P(h_0,g_0) \le P(h,g_0)$ for all $h$, which implies that
$h_0$ is a minimizer of $P$.
The continuity of $g$ now implies that
\[
  \lim_{k\to\infty} \lVert \pi(g^{(k)},u) -u\rVert^2 =
  \lim_{k\to\infty} \lVert g^{(k)}(h^{(k)}) - u \rVert^2
  = \lVert \pi(g_0,u)-u\rVert^2.
\]
Finally, Assumption~\ref{ass:2} implies that the value
of $\lVert\pi(g_0,u)-u\rVert^2$ does not depend on the choice
of the convergent subsequence chosen earlier.
Thus a standard subsequence argument implies the
(sequential) continuity of the mapping
$g \mapsto \lVert \pi(g,y)-u\rVert^2$.

Next we obtain that
\begin{multline*}
  \lVert\pi(g^{(k)},u)-u\rVert^2 \le
  P(h^{(k)},g^{(k)}) = \lVert g(\hat{h})-u\rVert^2 + \lambda\mathcal{R}(\hat{h})\\
  \le 2 \lVert g^{(k)}(\hat{h}) \rVert^2 + 2\lVert u \rVert^2 + \lambda\mathcal{R}(\hat{h})
  \le 2 \sup_{g\in\mathcal{G}} \lVert g(\hat{h}) \rVert^2 + \lambda\mathcal{R}(\hat{h}) + 2\lVert u \rVert^2
\end{multline*}
for all $k \in \N$.
Since $\mathbb{E}_{u\sim\mathbb{P}_U} (\lVert u \rVert^2) < \infty$,
we can thus apply Lebesgue's Theorem of Dominated Convergence
and obtain that
\[
  \lim_{k\to\infty} \mathbb{E}_{u\sim \mathbb{P}_U} (\lVert\pi(g^{(k)},u)-u\rVert^2)
  = \mathbb{E}_{u\sim\mathbb{P}_U}(\lVert \pi(g_0,u) - u\rVert^2),
\]
which proves the continuity of $\mathcal{F}^W$.
\medskip

The proof of the continuity of $\mathcal{F}^A$ is similar.
The only difference is that we require for the application of Lebesgue's Theorem
of Dominated Convergence that $\mathcal{E}_{u\sim \mathbb{P}_Z}(\lVert u \rVert^2) < \infty$.
This, however, holds because of the construction of the distribution $\mathbb{P}_Z$
in view of the boundedness of the coefficients $a_i$ and the finiteness
of the second moments of the data.
\medskip

Finally, we show the lower semi-continuity of the term $\mathcal{F}^S$.
Here we start, similarly as above, by showing that the mapping $\mathbf{g} \mapsto \lVert u^*(\mathbf{g};v) - u\rVert^2$
is lower semi-continuous.

Assume therefore that the sequence $\{\mathbf{g}^{(k)}\}_{k\in\N}$ converges
to $\mathbf{g}_0$. Moreover, let $(\mathbf{a}^{(k)},\mathbf{u}^{(k)},h^{(k)})$
be the corresponding solutions of~\eqref{eq:TikGen}.
As in the argumentation above, we then obtain that the sequence $\{\mathbf{h}^{(k)}\}_{k\in\N}$
is bounded, and thus we can choose a subsequence converging to some
$\mathbf{h}_0$.
By the continuity of the evaluation mapping,
the corresponding sequence
of generated data $\mathbf{u}^{(k)} = \mathbf{g}^{(k)}(\mathbf{h}^{(k)})$ converges to
$\mathbf{u}_0 = \mathbf{g}_0(\mathbf{h}_0)$.
In addition, the parameters $\mathbf{a}^{(k)}$ are by definition uniformly bounded
and thus also admit a subsequence converging to some $\mathbf{a}_0$.
In view of the lower semi-continuity of the functional in~\eqref{eq:TikGen},
it then follows that
$(\mathbf{a}_0,\mathbf{u}_0,\mathbf{h}_0)$ is a solution of~\eqref{eq:TikGen}
for the generator $\mathbf{g}_0$.

Now note that $u^*(\mathbf{g}_0;v)$ was chosen to be
the $i$-th component of the solution of~\eqref{eq:TikGen} for which
$\lVert u^*(\mathbf{g}_0;v) - u\rVert$ was minimal.
Thus
\[
  \lVert u^*(\mathbf{g}_0;v) - u\rVert^2
  \le \lVert u_0 - u\rVert^2
  = \lim_{k\to\infty} \lVert u^{(k)} - u \rVert^2.
\]
Since this holds independent of the choice of the convergent subsequence,
it follows that the mapping
$\mathbf{g} \mapsto \lVert u^*(\mathbf{g};v) - u\rVert^2$ is
lower semi-continuous.
Now the lower semi-continuity of $\mathcal{F}^S$ is an immediate consequence
of Fatou's lemma.  

\section{Proof of Theorem~\ref{theorem:nonincrease}}\label{app:proofs_2}

The argument follows the original proof in \cite{lee2000algorithms}.
We are interested in the loss
\begin{equation}
  L(w) = \frac{1}{2}\|u - wH\|^2 - \frac{1}{2}\|\hat{u} - w\hat{H}\|^2 + \frac{1}{2}\|\tilde{u} - w\tilde{H}\|^2 + \gamma \|w\|_1,
  \label{eq:loss}
\end{equation}
where $w \in \mathbb{R}_+^{d}$ is a row vector, $u, \hat{u}, \tilde{u} \in \mathbb{R}_+^{n}$ and $H, \hat{H}, \tilde{H} \in \mathbb{R}_+^{d \times n}$.
We assume that all data is strictly positive. 
For simplicity we assume that the amount of weak supervision, adversarial and strong supervision data is equal.

The D+MDNMF update in this case becomes
\begin{equation}
  w^{k+1} = w^k \odot \frac{uH^T + \tilde{u}\tilde{H}^T + w^k \hat{H}\hat{H}^T}{w^k(HH^T + \tilde{H}\tilde{H}^T) + \hat{u}\hat{H}^T + \gamma}.
  \label{eq:update}
\end{equation}

The main tool we need for the proof are so-called auxiliary functions.
\begin{definition}
  An auxiliary function for $L(w)$
  is a function $M \colon \mathbb{R}^d \times \mathbb{R}^d \to \mathbb{R}$
  such that
  \[L(w) \le M(w,w^k), \quad L(w) = M(w, w), \]
  for all $w$, $w^k \in \mathbb{R}^d$.
\end{definition}
\begin{lemma}\label{le:auxiliary}
  If $M(w,w^k)$ is an auxiliary function of $L(W^k)$, then given the update
\[w^{k+1} = \arg\min_{w} M(w,w^k), \]
  we have
  \[L(w^{k+1})\le L(w^k).\]
\end{lemma}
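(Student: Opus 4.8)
The plan is to derive the inequality by chaining together the two defining properties of an auxiliary function with the minimality of $w^{k+1}$; no induction or analysis is needed, just a three–step comparison. First I would evaluate the majorization property $L(w) \le M(w,w^k)$ at $w = w^{k+1}$, giving $L(w^{k+1}) \le M(w^{k+1},w^k)$. Next, since $w^{k+1}$ is by definition a global minimizer of the map $w \mapsto M(w,w^k)$, we have $M(w^{k+1},w^k) \le M(w^k,w^k)$. Finally the second defining property $M(w,w) = L(w)$ (applied at $w = w^k$) yields $M(w^k,w^k) = L(w^k)$. Concatenating the three gives
\[
  L(w^{k+1}) \le M(w^{k+1},w^k) \le M(w^k,w^k) = L(w^k),
\]
which is precisely the assertion. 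One small caveat I would flag explicitly: the statement only needs the existence of a minimizer of $M(\cdot,w^k)$ (so that $w^{k+1}$ is well defined); if $M(\cdot,w^k)$ has several minimizers the argument is unaffected, since any of them may be chosen as $w^{k+1}$.

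There is essentially no obstacle in the lemma itself — it is a purely formal consequence of the definition, exactly as in the classical Lee–Seung argument it is modeled on. The genuine difficulty, which I would address in the remaining part of the appendix rather than here, is to construct a concrete auxiliary function $M(w,w^k)$ for the loss $L$ in~\eqref{eq:loss} whose unique minimizer in $w$ coincides with the multiplicative update~\eqref{eq:update}. The new ingredient compared to standard NMF is the adversarial term $-\tfrac12\|\hat u - w\hat H\|^2$, which enters $L$ with a negative sign and is concave in $w$; I expect to majorize it by its first–order Taylor polynomial at $w^k$ (a valid affine upper bound precisely because of concavity), while the two convex quadratic terms $\tfrac12\|u-wH\|^2$ and $\tfrac12\|\tilde u - w\tilde H\|^2$ and the $\ell^1$ term are handled by the usual diagonal quadratic majorants, and then check that setting the gradient of the resulting separable majorant to zero reproduces~\eqref{eq:update}.
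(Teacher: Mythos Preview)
Your proof of the lemma is correct and essentially identical to the paper's: both chain the majorization inequality at $w^{k+1}$, the minimality of $w^{k+1}$, and the equality $M(w^k,w^k)=L(w^k)$ into the single line $L(w^{k+1}) \le M(w^{k+1},w^k) \le M(w^k,w^k) = L(w^k)$.
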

\begin{proof}
By the definition of auxiliary functions 
\[L(w^{k+1}) \le M(w^{k+1}, w^k) \le M(w^k,w^k) = L(w^k), \]
where the second inequality follows from the fact that $w^{k+1}$ is a minimizer of $M(w,w^k)$. 
\end{proof}

In the following, we will show that the multiplicative update~\eqref{eq:DANMFW}
is of the form described in Lemma~\ref{le:auxiliary} with the auxiliary function 
\[
  M(w, w^k) = L(w^k) + (w - w^k)^T \nabla_w L(w^k) + (w - w^k)^TK(w^k)(w - w^k)
\]
for the loss~\eqref{eq:loss}, where 
\[
  \nabla_w L(w^k) = uH^T + \hat{u}\hat{H}^T + \tilde{u}\tilde{H}^T - w(H^T + \hat{H}^T + \tilde{H}^T) + \gamma 
\]
and
\[
  K(w^k) = \text{diag}\left(\frac{w_k(HH^T + \tilde{H}\tilde{H}^T) + \hat{u}\hat{H}^T + \gamma}{w_k}\right).
\]
Here the division is interpreted componentwise, 
As this function is quadratic and strictly convex it is clear that
\[ w^{k+1} = w^{k} - K(w^{k})^{-1}\nabla_w L(w^k)\]
is a minimizer of $M(w,w^k)$. 
Moreover, one can verify that this update leads to the proposed multiplicative update~\eqref{eq:update}.

Thus, if we can show that $M(w,w^k)$ is an auxiliary function for the loss, it follows that the loss 
is non-increasing under this update.
Following \cite{lee2000algorithms} this is equivalent to showing that the matrix
\[
  B(w^k) = K(w^k) - H^TH + \hat{H}^T\hat{H} - \tilde{H}^T\tilde{H},
\]
which is the difference between the proposed auxiliary function and the expansion of the loss~\eqref{eq:loss} around $w^k$, is positive semi-definite. 
The terms corresponding to weak supervision data and by extension strong supervision data
have been shown to be positive semi-definite by Lee and Seung, and the remaining terms are also positive semi-definite as
\[
  w^T(\text{diag}(\frac{\hat{u}\hat{H}^T + \gamma}{w^k}) + \hat{H}\hat{H}^T)w \ge \|\hat{H}^Tw\|^2 \ge 0,
\]
given the non-negativity of $\hat{u} \hat{H}^T$ and $\gamma$. 
It is worth noting that when including the adversarial term, $B(w^k)$ is strictly positive definite when $\hat{u}$ and $\hat{H}^T$ are strictly positive. 

Thus, $M(w,w^k)$ is an auxiliary function for the proposed loss~\eqref{eq:loss}, and thus the loss is non-increasing under the update. It follows that the full D+MDNMF is non-increasing under the update when the latent variables are fixed.
\begin{remark}
  If any component of $w^k$ is $0$, it will stay $0$ under the update. This can only happen analytically if it is initialized as such or if the data is not strictly positive, and numerically due to rounding.
  In this case, we set the corresponding component of $K(w^k)^{-1}$ to be $0$, which again corresponds to the proposed update, and the non-increase of the loss~\eqref{eq:loss} still holds.
\end{remark}
\section{Scaling of Adversarial Data}
\label{app:scaling}
Generally, we can select adversarial data for the $i$-th source following the mixture distribution 
\[
  \mathbb{P}_{Z_i} := \sum_{j\neq i} \omega_{ij} \mathbb{P}_{{U}_j}
  + \omega_{ii}\mathbb{P}_{V_i},
\]
where $\omega_{ij}$ represents the weight of the corresponding distribution of data $\mathbb{P}_{{U}_j}$, and satisfies the constraints $0 \le \omega_{ij} \le 1$ and $\sum_{j} \omega_{ij} = 1$.
Moreover, $\mathbb{P}_{V_i} = (f_i)_{\#}(\mathbb{P}_{\mathcal{A}\times V})$,
with $f_i$ as defined in~\eqref{eq:fi}.
A natural choice for the weights is to select $\omega_{ij}$ equal to the ratio
of the amount of available data from the corresponding distribution and the total amount of adversarial data. However, more nuanced selection of 
these weights might be interesting, for example when training adversarially against one class is more important than training adversarially against another class.

A particularly useful property of NMF is that for any $\alpha >0$, we have
\begin{align*} 
  h(W,\alpha u;\alpha\lambda) &= \argmin_{h \ge 0} \|\alpha u - Wh\|^2 +\alpha\lambda \lvert h\rvert_1 \\ 
  &= \argmin_{h \ge 0} \alpha^2 \Bigl( \Bigl\lVert u - W\frac{h}{\alpha}\Bigr\rVert^2 + \lambda \Bigl\lvert\frac{h}{\alpha}\Bigl\rvert_1 \Bigr) \\
  &= \alpha h(W, u; \lambda).
\end{align*}
Thus, we can compute the expected value over the adversarial data as
\begin{multline*}
  \mathbb{E}_{u \sim \mathbb{P}_{Z_i}}[\|u - Wh(W,u;\lambda)\|^2] = \sum_{j \neq i}\mathbb{E}_{u \sim \mathbb{P}_{U_j}}[\|\sqrt{\omega_{ij}}u - Wh(W,\sqrt{\omega_{ij}}u;\sqrt{\omega_{ij}}\lambda)\|^2] \\
  + \mathbb{E}_{u \sim \mathbb{P}_{V_i}}[\|\sqrt{\omega_{ii}}u - Wh(W,\sqrt{\omega_{ii}}u;\sqrt{\omega_{ii}}\lambda)\|^2].
\end{multline*}
We can then store the adversarial data in a matrix
\[ \hat{U}_i = \begin{bmatrix}\sqrt{\omega_{i1}}U_1 & \hdots  & \sqrt{\omega_{i(i-1)}} U_{i-1}& \sqrt{\omega_{i(i+1)}} U_{i+1} & \hdots & \sqrt{\omega_{iS}} U_S & \sqrt{\omega_{ii}} V_i \end{bmatrix} \]
and obtain with Monte Carlo integration the approximation
\[ \mathbb{E}_{u \sim \mathbb{P}_{Z_i}}[\|u - W_ih(W_i,u;\lambda)\|^2] \approx \frac{1}{\hat{N}_i}\|\hat{U}_i - W_iH(W_i,\hat{U}_i)\|_F^2,\]
where the sparsity parameter for $H(W, \hat{U}_i)$ needs to be scaled per data according to $\omega$. This 
can easily be done with the proposed numerical update.  
Scaling the sparsity parameter only has an effect if the sparsity parameter $\lambda$ is large, and
can likely be ignored in some applications.

Taking the expected value over $\mathbb{P}_{V_i}$ requires samples from the mixed data
and the weights in order to approximate the joint distribution
$\mathbb{P}_{\mathcal{A}\times V}$.
For the particular case of NMF, however, we can obtain a reasonable approximation
that only requires sampling the data $V$:
In the cases where $\alpha \lambda \ll 1$ or $\alpha \approx 1$ we have 
\[
 \alpha h(W,u;\lambda) \approx h(W,\alpha u; \lambda),  
\]
which further implies that
\[
  \lVert \alpha u - W_i h(W_i,\alpha u;\lambda)\rVert_2^2 \approx \alpha^2 \lVert u- W_i h(W_i,u;\lambda)\rVert_2^2.
\]
Because of the assumed independence of $\mathbb{P}_V$ and $\mathbb{P}_{\mathcal{A}}$, we further have that
\begin{align*}
  \mathbb{E}_{u \sim \mathbb{P}_{{V}_i}}\bigl(\lVert u - W_i h(W_i,u)\rVert^2\bigr)
  &= \mathbb{E}_{(a,u) \sim \mathbb{P}_{\mathcal{A} \times V}}\Bigl(\Bigl\lVert \frac{a_i}{\sum_{j} a_j^2}u - W_i h\Bigl(W_i,\frac{a_i}{\sum_{j} a_j^2}u\Bigr)\Bigr\rVert^2\Bigr)\\
  &\approx\mathbb{E}_{(a,u) \sim \mathbb{P}_{\mathcal{A} \times V}}\Bigl(\Bigl(\frac{a_i}{\sum_{j} a_j^2}\Bigr)^2 \bigl\lVert u - W_i h(W_i,u)\bigr\rVert^2\Bigr)\\
  &= \beta_i \mathbb{E}_{u \sim \mathbb{P}_{V}} \bigl(\lVert u - W_i h(W_i,u)\rVert^2\bigr)
\end{align*}
with
\begin{equation}
  \beta_i = \mathbb{E}_{a \sim \mathbb{P}_A}\Bigl(\Bigl(\frac{a_i}{\sum_{j} a_j^2}\Bigr)^2\Bigr).
  \label{eq:beta}
\end{equation}

We note that $a_i/(\sum_{j} a_j^2) \approx 1$ is the case when $a_i \approx 1/S$, that is, when all weights are roughly the same size.

\section{Additional notes on numerical implementation}
\label{app:numerical}

When applying stochastic multiplicative updates to ANMF and D+MDNMF we face the challenge that we are minimizing a loss with
different terms with potentially unbalanced data. This means that we can, and most likely should, select different batch sizes for the different datasets.
Some potential strategies for this selection are the following:
\begin{itemize}
    \item \textbf{Proportional sampling}: Select the batch sizes proportional to the size of the datasets, in which case the number of batches are also equal. 
    \item \textbf{Undersampling}: End the epoch and reshuffle all data when we have passed through all batches of the dataset with the smallest number of batches.
    \item \textbf{Oversampling}: When all batches of one dataset have been passed through, resample the data-points from this dataset until all batches of the other datasets have been passed through.
    \item \textbf{Iterative sampling}: Reshuffle only the corresponding data when we have passed through all batches of this dataset. In this case each dataset has its corresponding number of epochs.
\end{itemize}
All approaches have their advantages and disadvantages, but we propose using undersampling and oversampling as they are simple to implement
while still allowing different batch sizes for the different terms.
Specifically, we first select which dataset we are interested in sampling fully, and then either
undersample or oversample the other data. 
For oversampling, we sample circularly, starting from the first data-points when we have passed through all data.

\section{Numerical convergence of multiplicative updates}
\label{app:convergence}
We run a convergence experiment to see if the proposed numerical algorithms behave as expected.
We synthetically generate $N = 2500$ strong supervised data of mixed "zero" and "one" digits, and use sparsity parameters $\lambda = \gamma = 10^{-10}$. For MDNMF and D+MDNMF we use $\tau_W = 1$ and $\tau_A = 0.1$,
and for D+ANMF we use $\tau_S = 0.5$. For MDNMF we use both data from other sources and mixed data as adversarial data, and for D+MDNMF we use the
strong supervised data for both the strong supervised and the weak supervised term.

We only report the convergence for the basis and latent variables for the "zero" digit source or all methods, including DNMF and D+MDNMF which are fitted for all other sources at the same time.
The results are shown in Figure \ref{fig:conv}.

\begin{figure*}[!htb]
    \centering
    \includegraphics[width = 0.49\textwidth]{./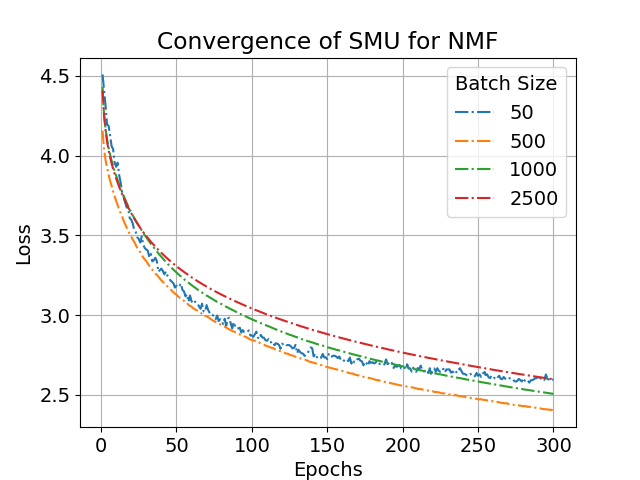}
    \includegraphics[width = 0.49\textwidth]{./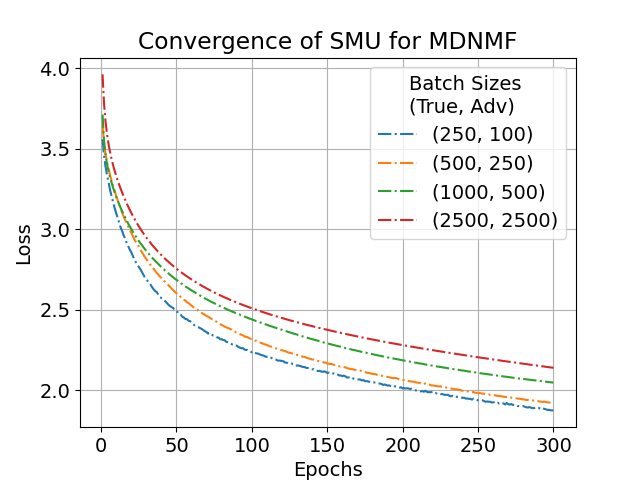}
    \includegraphics[width = 0.49\textwidth]{./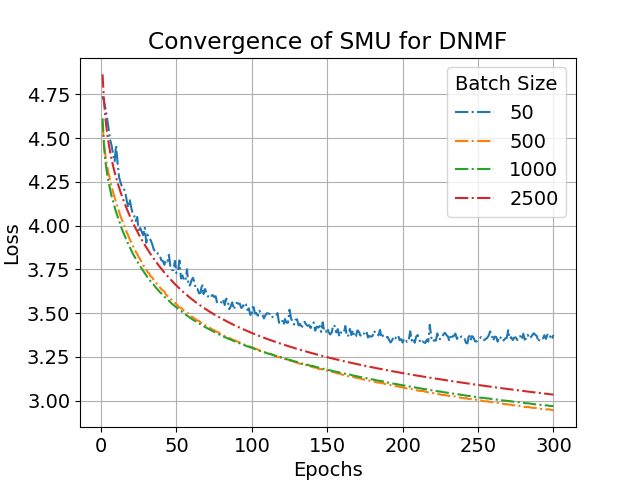}
    \includegraphics[width = 0.49\textwidth]{./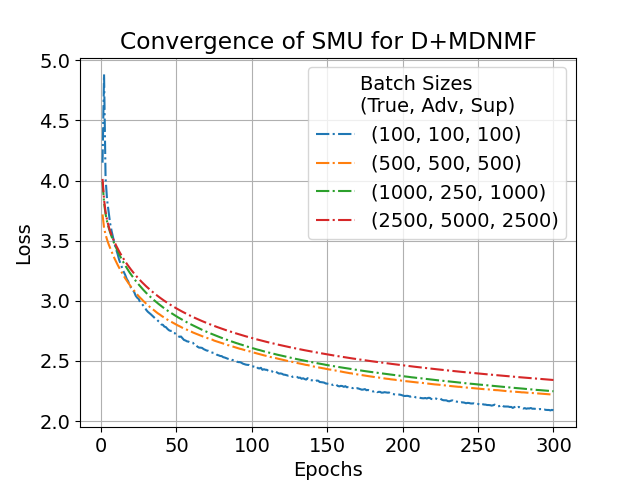}
    \caption{Convergence experiment on synthetic mixed data for different methods. We see that for the deterministic algorithms, the loss is non-increasing.
    Careful selection of batch sizes can lead to improved convergence speed, as well as faster computation time in some cases. 
    Selecting the batch size too low can hurt convergence speed. We note that D+MDNMF uses randomized initialization instead of exemplar based initialization.}
    \label{fig:conv}
\end{figure*}

\end{document}